  \def\copy@right{\textcopyright}%
\title{Impartial Avoidance Games\\ on Convex Geometries}
\author{Seomgeun Shim\authornote{1}}
\email{shimmiguel00@gmail.com}.)}
\begin{document}

\maketitle

\begin{abstract}

  We analyze a two-player game in which players take turns avoiding the selection of certain points within a convex geometry. The objective is to prevent the convex closure of all chosen points from encompassing a predefined set. The first player forced into a move that results in the inclusion of this set loses the game. We redevelop a theoretical framework for these avoidance games and determine their nim numbers, including cases involving vertex geometries of trees, edge geometries of trees, and scenarios where the predefined set consists of extreme points.
\end{abstract}

\section{Introduction}

A convex geometry is an abstract generalization of the notion of convexity in Euclidean space. In this paper, we study an avoidance game that is a type of misere game where two players take turns selecting previously unchosen points from a convex geometry. Following the normal play convention, our version of the avoidance game does not allow for the convex closure of chosen points to contain the winning set. The winning set is predefined before starting the game. The winner of the game is the last player able to move.

Originally, this game was introduced by Anderson and Harary \cite{Anderson} as a game using groups and further developed in \cite{Ernst} and \cite{Benesh}. In this paper, we study a variation of this game played in convex geometries, like \cite{McCoy}. Our research builds upon prior studies of Achievement games, a type of convex geometry game in which two players aim to enclose a winning set rather than avoid it. Throughout this research paper, we use the structural diagram and equivalences developed in \cite{Ernst} and \cite{McCoy} to compute the nim numbers of different types of convex geometries like edge geometries, vertex geometries, and affine geometries.  The key new result of this paper is the determination of the spectrum of nim numbers for avoidance games and the sum of avoidance games. While \cite{McCoy} examined achievement games in convex geometries, this work provides the first characterization of avoidance games in tree structures and when winning sets consist of extreme points, computing their nim numbers through a modified structural induction approach (due to the nature of misere games).

The paper is organized as follows: In Section 2, we review the basic terminologies of impartial games and convex geometries and establish the notation used throughout the paper. In Section 3, we introduce structural equivalence and structure diagram for avoidance games and prove it. In Section 4, we determine the spectrum of nim numbers of games where the winning set consists of extreme points. In Section 5, we determine the spectrum of nim numbers of games played on vertex geometries of trees and edge geometries of trees, which are similar in nature. In Section 6, we make our concluding remarks and propose a conjecture for further study.

\section{Preliminaries}

First, we recall the basic terminologies of impartial games, which are given a thorough treatment in \cite{Siegel}. 

An \emph{impartial game} is a finite set of positions with a starting position and a collection of options of position $P$. In each move of the game, two players take turns selecting a move from \(\text{Opt}(P)\), the set of valid options for the current position \( P \). If a player is faced with an empty option set, they cannot make a move and lose the game. In the context of this paper, an empty option occurs when every move results in the convex closure of selected points including the winning set. Since all games must conclude in a finite number of turns, infinite sequences of play are not allowed. 

Each position in the game falls into one of two categories:  
An \emph{N-position} (Next-player win) means the player whose turn it is can force a win.  The other is a \emph{P-position} (Previous-player win) means the player who just moved has secured a winning strategy, leaving their opponent with no path to victory.

The \emph{minimum excludant} $\text{mex}(A)$ of a set $A$ of nonnegative integers is the smallest nonnegative integer not in the set $A$. The \emph{nim number} of a position $P$ of the game is defined recursively by 
\[
\text{nim}(P) := \text{mex}(\text{nim}(\text{Opt(P)}))
\]
Since the minimum excludant of the empty set is 0, the terminal positions of a game have a nim-number of 0. The nim-number of a game corresponds to the nim-number of its starting position. This value determines the game's outcome, as a position \( P \) is a P-position if and only if \( \text{nim}(P) = 0 \).

\begin{example}
    Suppose that we are given a set $A = \{0,1,4,5 \}$ and $B = \{ 1,2,3\}$. $\text{mex}(A) = 2$ because 2 is the least nonnegative integer that is not in $A$ and $\text{mex}(B) = 0$ as 0 is the least nonnegative integer that is not in $B$. 
\end{example}

The \emph{sum} of two games, \( P \) and \( R \), is denoted as \( P + R \) and is defined as the game whose set of options is 
\[
\text{Opt(P+R)} := \{Q+R \mid Q \in \text{Opt}(P)\} \cup \{P+S \mid S \in \text{Opt}(R)\}
\]
In other words, in each turn, a player can make a move in either game $P$ or $R$. It is well-known that the nim number of the sum of these two games can be determined by 
\[
\text{nim}(P+R) = \text{nim}(P) \oplus \text{nim}(R)
\]
where $\oplus$ is a bitwise XOR operator. 
The one-pile Nim game with \( n \) stones is represented by the nimber \( *n \). The set of possible moves from \( *n \) is given by:  

\[
\text{Opt}(*n) = \{ *0, *1, \dots, *(n - 1) \}
\]
A key result highlights the importance of nim numbers:  
\begin{theorem}
    (Sprague–Grundy): For any impartial game \( P \), we have  
\[
P = *\text{nim}(P)
\]
\end{theorem}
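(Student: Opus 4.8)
The plan is to show that $P$ and $*\text{nim}(P)$ are interchangeable in every disjunctive sum, which for impartial games reduces to proving that $P + *\text{nim}(P)$ is a P-position (equivalently, has nim number $0$). Along the way I will use the small fact that $\text{nim}(*n)=n$: this is immediate by induction from $\text{Opt}(*n)=\{*0,\dots,*(n-1)\}$, since then $\text{nim}(*n)=\text{mex}\{0,1,\dots,n-1\}=n$. The real work is a structural induction on the (necessarily finite) game tree of $P$.

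For the base case, if $P$ is terminal then $\text{nim}(P)=0$ and $P+*0$ is itself terminal, hence a P-position. For the inductive step, write $n=\text{nim}(P)$ and exhibit a second-player response to every opening move in $P+*n$. If the first player moves in the $*n$ component to some $*m$ with $m<n$, then the $\text{mex}$ definition of $n$ supplies an option $P'\in\text{Opt}(P)$ with $\text{nim}(P')=m$, and moving to $P'+*m$ lands in a P-position by the induction hypothesis. If instead the first player moves in the $P$ component to some $P'$ with $\text{nim}(P')=m$, observe first that $m\neq n$, again by the $\text{mex}$ definition of $n$. When $m<n$, respond in the $*n$ component by moving to $P'+*m$, a P-position by induction. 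When $m>n$, the $\text{mex}$ definition applied now to $P'$ forces an option $P''\in\text{Opt}(P')$ with $\text{nim}(P'')=n$, and $P''+*n$ is again a P-position by induction. These cases are exhaustive, so $P+*n$ is a P-position, whence $P=*\text{nim}(P)$.

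The only delicate points are bookkeeping: finiteness of the game guarantees the induction is well founded, and ``$G+H$ is a P-position'' is precisely what ``$G=H$'' means for impartial $G$ and $H$, so the conclusion $P=*\text{nim}(P)$ is exactly what the induction yields. The one step that is not purely mechanical is the case $m>n$ above, where one must invoke the minimum-excludant property a second time — applied to the position $P'$ already moved to, rather than to $P$ — in order to recover a sub-option with nim value exactly $n$; this is the part I expect to need the most care to state cleanly. Finally, I note that since the excerpt already grants $\text{nim}(P+R)=\text{nim}(P)\oplus\text{nim}(R)$ together with the characterization ``P-position $\iff$ nim number $0$'', there is also a one-line finish: for every game $R$, both $P+R$ and $*\text{nim}(P)+R$ have nim number $\text{nim}(P)\oplus\text{nim}(R)$ and hence the same outcome. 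The structural induction above is the route I would take were those facts not yet in hand.
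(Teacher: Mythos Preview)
Your argument is correct and is essentially the classical textbook proof of the Sprague--Grundy theorem. Note, however, that the paper itself does \emph{not} supply a proof of this statement: it is listed in the Preliminaries section as a known background result (quoted from \cite{Siegel}) and is immediately followed by the next definition with no proof given. So there is no ``paper's own proof'' to compare against; you have simply filled in what the paper omits.

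Your structural induction, together with the observation that the $m>n$ case requires a second application of the minimum-excludant property (to the option $P'$ rather than to $P$), is exactly the standard route. Your closing remark is also apt: since the paper already records $\text{nim}(P+R)=\text{nim}(P)\oplus\text{nim}(R)$ and the characterization of P-positions by nim number $0$ \emph{before} stating the theorem, the one-line finish you mention would in fact be entirely licit in context. Either way, nothing is missing.
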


\begin{definition}
    For convention, we define $\text{pty}(X) := |X| \pmod 2$.
\end{definition}
For the remainder of this section, we recall the basics terminologies of the convex geometries from \cite{Ahrens} and \cite{McCoy}. 

\begin{definition}
A \emph{convex geometry} is a pair $(S, \mathcal{K})$, where $S$ is a finite set and $\mathcal{K}$ is a family of subsets of $S$ satisfying the following properties:
\begin{enumerate}
    \item $S \in \mathcal{K}$;
    \item $K, L \in \mathcal{K}$ implies $K \cap L \in \mathcal{K}$;
    \item $S \ne K \in \mathcal{K}$ implies that there exists some  $a \in S \setminus K$ such that $K \cup \{a\} \in \mathcal{K}$.
\end{enumerate}
The sets $K \in \mathcal{K}$ are called \emph{convex}.
\end{definition}

\begin{definition}
\noindent A convex geometry induces a convex closure operator \( \tau: 2^S \to 2^S \) defined by:  
\[
\tau(A) := \bigcap \{K \in \mathcal{K} \mid A \subseteq K\}.
\]  
A point \( a \) in a subset \( A \) of a convex geometry is called an \emph{extreme point} of \( A \) if \( a \notin \tau(A \setminus \{a\}) \). The set of all extreme points of \( A \) is denoted by \( \text{Ex}(A) \).
\end{definition}

\begin{example}
    A valid convex geometry on $S = \{1,2,3,4\}$ and $\mathcal{K} = \{\{2,3,4\}, \{1,2,3,4\}\}$. Another valid convex geometry is when $S=\{1,2,3,4\}$  and $\mathcal{K} = \{\{2,3\}, \{2,3,4\}, \{1,2,3\}, \{1,2,3,4\}\}$.
\end{example}
\begin{example}
    Suppose we take $S$ as a finite subset of $\mathbb{R}^2$. Then, the convex closure operator of set $S$ is the convex hull of $S$.
\end{example}

Since we have covered the necessary terminology, we can move on to the detailed description of avoidance game $\text{DNG}(S,W)$ played on convex geometry $(S,K)$, with a fixed, nonempty winning subset of $W \in S$. In the $n$th turn, a player can chose the point $p_n$ if and only if the convex closure of jointly selected points $P = \{p_1,p_2, \dots, p_{n-1},p_n\}$ is not a superset of $W$. In other words, if
\[
W \not\subset \tau(P) 
\]
the player can choose the point $p_n$. The last player to move wins the game. We call $P$ the \textit{current position} of the game. \newline

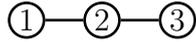
\begin{figure}[htp]
    \centering
    \begin{tikzpicture}[
        scale=2,line width=1pt,
        node distance=0.3cm and 0.5cm, 
        every node/.style={draw, circle, align=center, minimum size=6pt, inner sep=1.2pt},
        blue/.style = {fill=blue!20},
        every edge quotes/.style = {auto, font=\footnotesize, sloped}
    ]

    \node (1) at (0,0) {1};
    \node (2) [right=of 1] {2};
    \node (3) [right=of 2] {3};

    \path (1) edge (2);
    \path (3) edge (2);
    \end{tikzpicture}
    \caption{Affine convex geometry in Example 8}
    \label{fig:convex-geometry-graph}
\end{figure}

\begin{example}
    Suppose we play the avoidance game on affine convex geometry in $\mathbb{R}$ with $S = {1,2,3}$ and $W = {2}$, showing in Figure~\ref{fig:convex-geometry-graph}. Here, the convex sets are The first player can choose either $1$ or $3$, which are the same by symmetry. Therefore, we can assume that they choose $1$. On the next turn, the second player does not have any valid moves as $W = \{2\} \in \tau(\{1,2\})$ and $W \in \tau(\{1,3\})$. Thus, the first player always wins.
\end{example}
\section{Structure Theory}

Structural equivalence identifies equivalent game positions, significantly reducing the computation of nim numbers by allowing the use of a smaller quotient of the game digraph. Structure theory from \cite{Ernst} can be used to arrive at the main results in sections 4 and 5. We will prove all of the propositions but void the proof of the main theorem of this section as it is included in \cite{Ernst}.

Consider the avoidance game \( GEN(S, W) \), where \( S \) represents a set and \( W \) is a winning subset. A subset \( M \subseteq S \) is referred to as a generating set if it satisfies \( W \subseteq \tau(M) \). If this condition is not met, \( M \) is termed a non-generating set. Furthermore, if \( M \) is non-generating but every larger subset \( N \) with \( M \subset N \subseteq S \) is generating, then \( M \) is classified as maximally non-generating. The collection of all maximally non-generating subsets is denoted by \( \mathcal{M} \).  

\begin{proposition}
    Every maximally non-generating set \( M \) is convex.  
\end{proposition}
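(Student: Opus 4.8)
The plan is to argue by contradiction, exploiting the interaction between the convex closure operator $\tau$ and the generating condition $W \subseteq \tau(M)$. Suppose $M$ is maximally non-generating but fails to be convex. Since $\tau$ is extensive (each point of $M$ lies in every convex set $K$ occurring in the intersection that defines $\tau(M)$, so $M \subseteq \tau(M)$), non-convexity of $M$ means $M \subsetneq \tau(M)$, and hence I may pick a point $a \in \tau(M) \setminus M$.

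The key step is to show that adjoining $a$ to $M$ does not enlarge the closure. First I would set $N := M \cup \{a\} \subseteq S$. Monotonicity of $\tau$ gives $\tau(M) \subseteq \tau(N)$. Conversely, $N \subseteq \tau(M)$ since both $M$ and $a$ lie in $\tau(M)$; and $\tau(M)$ is itself convex, being a finite intersection of members of $\mathcal{K}$ and therefore in $\mathcal{K}$ by axiom (2) together with induction (the family is nonempty because $S \in \mathcal{K}$). Applying $\tau$ to $N \subseteq \tau(M)$ and using idempotence, $\tau(N) \subseteq \tau(\tau(M)) = \tau(M)$. Combining the two inclusions yields $\tau(N) = \tau(M)$.

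From here the contradiction is immediate: since $W \not\subseteq \tau(M) = \tau(N)$, the set $N$ is non-generating, yet $M \subsetneq N \subseteq S$ (the inclusion is proper because $a \notin M$), which contradicts the maximality of $M$. Hence $\tau(M) = M$, i.e.\ $M$ is convex.

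I do not expect a real obstacle here; the proof is essentially a one-paragraph argument once the standard closure-operator facts are in place. The only points needing care are that $\tau$ is extensive, monotone, and idempotent, and that $\tau(M)$ is a genuine convex set — all of which follow directly from the three convex-geometry axioms (in particular, $S \in \mathcal{K}$ makes the defining intersection nonempty, and closure of $\mathcal{K}$ under finite intersection makes $\tau(A)$ convex). These I would either quote from the preliminaries or verify in a line before running the argument above.
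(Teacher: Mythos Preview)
Your proposal is correct and follows essentially the same contradiction argument as the paper: assume $\tau(M)\neq M$, pick a point of $\tau(M)\setminus M$, and observe that adjoining it yields a strictly larger non-generating set. Your version is simply more explicit in justifying the crucial step $\tau(M\cup\{a\})=\tau(M)$ via the closure-operator properties, which the paper's terse proof leaves implicit.
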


\begin{proof}
    Note that a set $M$ is convex if and only if $\tau(M) = M$. Assume the contradiction and suppose that $\tau(M) \neq M$. Then, we can choose $x \in \tau(M)$ not in $M$, so the set is not maximally non-generating, contradiction. Therefore, $\tau(M) = M$, so $M$ has to be convex if it is maximally non-generating.
\end{proof}

\begin{definition}
    Define the set of intersection subsets as  
    \[
    \mathcal{I} := \{\bigcap_{\mathcal{N} \subseteq \mathcal{M}} \mathcal{N}\}
    \]  
    which consists of all elements common to every maximally non-generating subset. The smallest such subset, known as the Frattini subset \( \Phi \), is the intersection of all maximally non-generating subsets. We ignore $\bigcap(\emptyset)$ from getting added to $\mathcal{I}$, unlike in \cite{McCoy} because this is an avoidance game, so the terminal position for which $W \subseteq \tau(P)$ does not exist. 
\end{definition}

\begin{definition}
    For some position $P$ in the game $\text{DNG}(S,W)$, define 
    \[
    \lceil P \rceil := \bigcap\{M\in\mathcal{M} \mid P\subseteq M\}
    \]
    Two game positions are structure equivalent if $\lceil P \rceil = \lceil R \rceil$. Because $\lceil I \rceil = I$ by the definition of $I$, we can let $X_I$ be the structure class under this equivalence relation.
\end{definition}

Two following theorems proved in \cite{Ernst} lets us use structural diagrams.
\begin{theorem}
    If $P, Q \in X_I$ and $\text{pty}(P) = \text{pty}(Q)$, then $\text{nim}(P) = \text{nim}(Q)$. 
\end{theorem}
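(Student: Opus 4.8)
The plan is to prove the two positions have the same nim number by constructing an explicit strategy-stealing style bijection between their option sets, respecting parity, and then inducting on the size of the structure class. First I would set up the induction on $|S \setminus I|$ (equivalently on the length of the longest play from the position), with the base case being positions $P$ with $\lceil P \rceil = I$ that are already ``full'' relative to their class. The key structural fact I would extract first is this: if $P \subseteq M$ for some $M \in \mathcal{M}$, then a point $a \in S \setminus P$ is a legal move from $P$ (i.e. $W \not\subseteq \tau(P \cup \{a\})$) if and only if $a$ lies in some maximally non-generating $M'$ containing $P$; and crucially, the new structure class $\lceil P \cup \{a\}\rceil$ depends only on $\lceil P \rceil$ and which ``direction'' $a$ points, not on the internal details of $P$. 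So I would first prove a lemma describing $\mathrm{Opt}(P)$ purely in terms of $\lceil P \rceil$: the legal moves split into those that stay inside $I$-closure (moves within $\lceil P\rceil$ are illegal since they don't change $\tau$, so actually all legal moves leave the class or refine it) and those that pass to a strictly larger structure class.

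Next, given $P, Q \in X_I$ with $\mathrm{pty}(P) = \mathrm{pty}(Q)$, I would build a parity-preserving correspondence between $\mathrm{Opt}(P)$ and $\mathrm{Opt}(Q)$. For a legal move $a$ from $P$ landing in class $X_J$, I would show there is a legal move $b$ from $Q$ also landing in $X_J$ with $\mathrm{pty}(P \cup \{a\}) = \mathrm{pty}(P) + 1 = \mathrm{pty}(Q) + 1 = \mathrm{pty}(Q \cup \{b\})$; by the inductive hypothesis applied to the smaller game below $X_J$ (which has strictly larger closure, hence fewer remaining moves), $\mathrm{nim}(P \cup \{a\}) = \mathrm{nim}(Q \cup \{b\})$. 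This shows $\mathrm{nim}(\mathrm{Opt}(P)) = \mathrm{nim}(\mathrm{Opt}(Q))$ as sets of integers, whence $\mathrm{mex}$ agrees and $\mathrm{nim}(P) = \mathrm{nim}(Q)$. The parity bookkeeping is what makes the correspondence well-defined: two positions in the same class but of opposite parity can genuinely have different nim numbers, so I must track parity at every step, and the inductive hypothesis is only invoked between equal-parity pairs.

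The main obstacle I anticipate is verifying that the map ``$a \mapsto$ some legal $b$ from $Q$ in the same target class'' is actually a well-defined bijection on options rather than merely a surjection onto target classes — that is, I need that for each pair $(X_J, \text{parity})$, the number of legal moves from $P$ into $X_J$ of that parity equals the number from $Q$. I expect this count to again be an invariant of the structure class, provable by noting that the moves from $P$ into a fixed larger class $X_J$ correspond to the elements of $J \setminus \lceil P \rceil$ together with the combinatorial data of which $M \in \mathcal{M}$ are involved — but since $\lceil P \rceil = \lceil Q \rceil = I$, this set $J \setminus I$ is literally the same for both. So the bijection can be taken to be the identity on the added point, and the only real content is checking that $a$ legal from $P$ is equivalent to $a$ legal from $Q$, which follows because legality of $a$ is governed by whether $P \cup \{a\} \subseteq M$ for some $M \in \mathcal{M}$, and $P \subseteq M \iff I \subseteq M$ for all relevant $M$ by maximality and the definition of $\lceil \cdot \rceil$. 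Once this identity-on-points correspondence is justified, parity is automatically preserved since $|P| \equiv |Q|$, and the induction closes.
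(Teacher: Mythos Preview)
The paper does not prove this theorem itself: it states explicitly that the result is taken from \cite{Ernst} and omits the argument. So there is no in-paper proof to compare against; I will simply assess whether your outline would succeed.

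It would not, for two connected reasons.

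First, the parenthetical claim that ``moves within $\lceil P\rceil$ are illegal since they don't change $\tau$'' is false. A move to $a$ is legal in $\text{DNG}(S,W)$ whenever $a\notin P$ and $P\cup\{a\}$ is non-generating; nothing requires the move to change the closure or the structure class. If $I=\lceil P\rceil$ and $a\in I\setminus P$, then $P\cup\{a\}\subseteq I\subseteq M$ for any $M\in\mathcal{M}$ containing $I$, so the move is legal, and since $P\subseteq P\cup\{a\}\subseteq I$ one gets $\lceil P\cup\{a\}\rceil=I$: the option stays in $X_I$ with flipped parity. These self-loop options are precisely why the recursion in the next theorem carries the extra term $\{\text{nim}_{\text{pty}(I)}(X_I)\}$.

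Second, once those moves are admitted, the ``identity on the added point'' correspondence breaks: if $a\in Q\setminus P\subseteq I$ then $a$ is a legal move from $P$ but already belongs to $Q$. You can repair things at the level of option \emph{classes}, since for legal $a\notin I$ the target class $\lceil I\cup\{a\}\rceil$ really depends only on $I$; but the option-class sets of $P$ and $Q$ may still differ in exactly one way: $X_I$ itself is an option class of $P$ if and only if $P\subsetneq I$. Thus when $P=I$ and $Q\subsetneq I$ share the parity $p=\text{pty}(I)$, their option nim-value sets differ by the single value $\text{nim}_{1-p}(X_I)$, and you must argue that inserting it does not change the mex. The observation your sketch never reaches is that $\text{nim}_{1-p}(X_I)$ is, by the recursion, the mex of a set containing $\text{nim}_{p}(X_I)$, hence $\text{nim}_{1-p}(X_I)\neq\text{nim}_{p}(X_I)$; since $\text{nim}_{p}(X_I)$ is the mex of the smaller option set, adjoining any value other than it leaves the mex unchanged. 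That inequality between the two parity nim-values of a class is the crux of the Ernst--Sieben argument, and without it your induction does not close.
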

\begin{definition}
    We define the \emph{type} of class $X_I$ as 
    \[
    \text{type}(X_I) = (\text{pty}(I), \text{nim}_0(X_I), \text{nim}_1(X_I))
    \]
    where $\text{nim}_0(X_I)$ is the nim number of even parity positions in $X_I$ and $\text{nim}_1(X_I)$ is the nim number of odd parity position in $X_I$. 
\end{definition}
\begin{definition}
    We define $\text{Opt}(X_I) = \{X_J \mid J = \text{Opt}(P) \text{ where } P\in X_I\}$
\end{definition}
\begin{theorem}
    We can compute the type of class $X_I$ by recursion relation
    \[
    \text{nim}_{\text{pty}(I)}(X_I) = \text{mex}(\text{nim}_{1-\text{pty}(I)}(\text{Opt}(X_I)))
    \]
    and 
    \[
    \text{nim}_{1-\text{pty}(I)}(X_I) = \text{mex}(\text{nim}_{\text{pty}(I)}(\text{Opt}(X_I))\cup{\text{nim}_{\text{pty}(I)}(X_I)})
    \]
\end{theorem}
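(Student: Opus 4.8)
\emph{Proof strategy.} The plan is to read the recursion off the mex rule, once the options of a position $P\in X_I$ are understood class-by-class, using the preceding theorem (invariance of $\text{nim}$ across a structure class at fixed parity) to pass between position-level and class-level nim values. First I would record two bookkeeping facts about the structural closure. (i) For every position $P$ we have $P\subseteq\lceil P\rceil$, and the families $\{M\in\mathcal M:P\subseteq M\}$ and $\{M\in\mathcal M:\lceil P\rceil\subseteq M\}$ coincide: ``$\subseteq$'' holds since $P\subseteq\lceil P\rceil$, and ``$\supseteq$'' since any $M\supseteq P$ is one of the sets intersected to form $\lceil P\rceil$. Consequently $\lceil\cdot\rceil$ is idempotent, $\lceil P\rceil$ is convex and non-generating, and every member of $X_I$ is a subset of $I$. (ii) A subset $N\subseteq S$ is non-generating precisely when $N\subseteq M$ for some $M\in\mathcal M$: any non-generating set maximal among those containing $N$ is automatically maximal among all non-generating sets.

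The core step---and the one I expect to be the main obstacle, as it is exactly what makes $\text{Opt}(X_I)$ well defined---is a uniformity claim: for $P\in X_I$,
\[
\text{Opt}(P)=\bigl\{\,P\cup\{a\}: a\in I\setminus P\,\bigr\}\ \cup\ \bigl\{\,P\cup\{a\}: a\notin I,\ I\cup\{a\}\text{ non-generating}\,\bigr\},
\]
where the first (``internal'') family lies in $X_I$ and is nonempty exactly when $P\subsetneq I$, while the second (``external'') family lies in a set of classes that is independent of $P$ and does not contain $X_I$. The internal part follows from monotonicity of $\lceil\cdot\rceil$, which forces $\lceil P\cup\{a\}\rceil$ between $\lceil P\rceil=I$ and $\lceil I\rceil=I$ when $a\in I\setminus P$. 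For the external part, fix $a\notin I$; by (i) and (ii) both whether $P\cup\{a\}$ is non-generating and the class $\lceil P\cup\{a\}\rceil$ are governed entirely by the family $\{M\in\mathcal M: I\subseteq M,\ a\in M\}$, hence do not depend on the choice of $P\in X_I$, and $a\in\lceil P\cup\{a\}\rceil\setminus I$ shows the resulting class differs from $X_I$. Write $\text{Opt}(X_I)$ for this $P$-independent collection of external classes.

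Then the computation is short. The set $I$ is the unique maximal member of $X_I$ and has parity $\text{pty}(I)$, so it has no internal options; its options are all external, of parity $1-\text{pty}(I)$, and lie in classes of $\text{Opt}(X_I)$, giving $\text{nim}_{\text{pty}(I)}(X_I)=\text{nim}(I)=\text{mex}\bigl(\text{nim}_{1-\text{pty}(I)}(\text{Opt}(X_I))\bigr)$ by the mex rule and the preceding theorem. Next, $X_I\neq\{I\}$ holds exactly when $X_I$ contains members of both parities (internal moves connect any member of $X_I$ to $I$ through consecutive sizes), and this is precisely the case in which $\text{nim}_{1-\text{pty}(I)}(X_I)$ is defined; in it, choose $Q\in X_I$ with $\text{pty}(Q)=1-\text{pty}(I)$, so $Q\subsetneq I$. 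Then $Q$ has at least one internal option, of parity $\text{pty}(I)$ and in $X_I$, so $\text{nim}_{\text{pty}(I)}(X_I)$ is among its option values, while its external options have parity $\text{pty}(I)$ and run over $\text{Opt}(X_I)$; hence $\text{nim}_{1-\text{pty}(I)}(X_I)=\text{nim}(Q)=\text{mex}\bigl(\text{nim}_{\text{pty}(I)}(\text{Opt}(X_I))\cup\{\text{nim}_{\text{pty}(I)}(X_I)\}\bigr)$. If one adopts the convention placing $X_I$ itself into $\text{Opt}(X_I)$, the only addition is the remark that the internal option just used forces $\text{nim}_{1-\text{pty}(I)}(X_I)\neq\text{nim}_{\text{pty}(I)}(X_I)$, so that adjoining $\text{nim}_{1-\text{pty}(I)}(X_I)$ to the mex in the first formula leaves it unchanged.
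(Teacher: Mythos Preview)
The paper does not prove this theorem; it attributes the result to \cite{Ernst} and states it without argument. So there is no in-paper proof to compare against. Your proposal is essentially correct and is the natural argument: establish that the family of maximal non-generating sets containing $P$ depends only on $I=\lceil P\rceil$, deduce that the set of ``external'' option classes of any $P\in X_I$ is the same $P$-independent family $\text{Opt}(X_I)$, and then evaluate the mex definition at the two representatives $P=I$ (no internal option) and $P=Q$ with $|Q|=|I|-1$ (exactly one internal option, landing in $X_I$). Your handling of the self-loop convention at the end is the right caveat: with $X_I\notin\text{Opt}(X_I)$ the two displayed identities are a genuine recursion (first line, then second), whereas with $X_I\in\text{Opt}(X_I)$ they are a consistent pair of equations but no longer directly recursive in the first line. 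This matches the paper's later usage, where $\text{Opt}(X_\Phi)$ is taken to consist of the proper option classes only.
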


\begin{figure}
    \centering
    \includegraphics[width=0.3\linewidth]{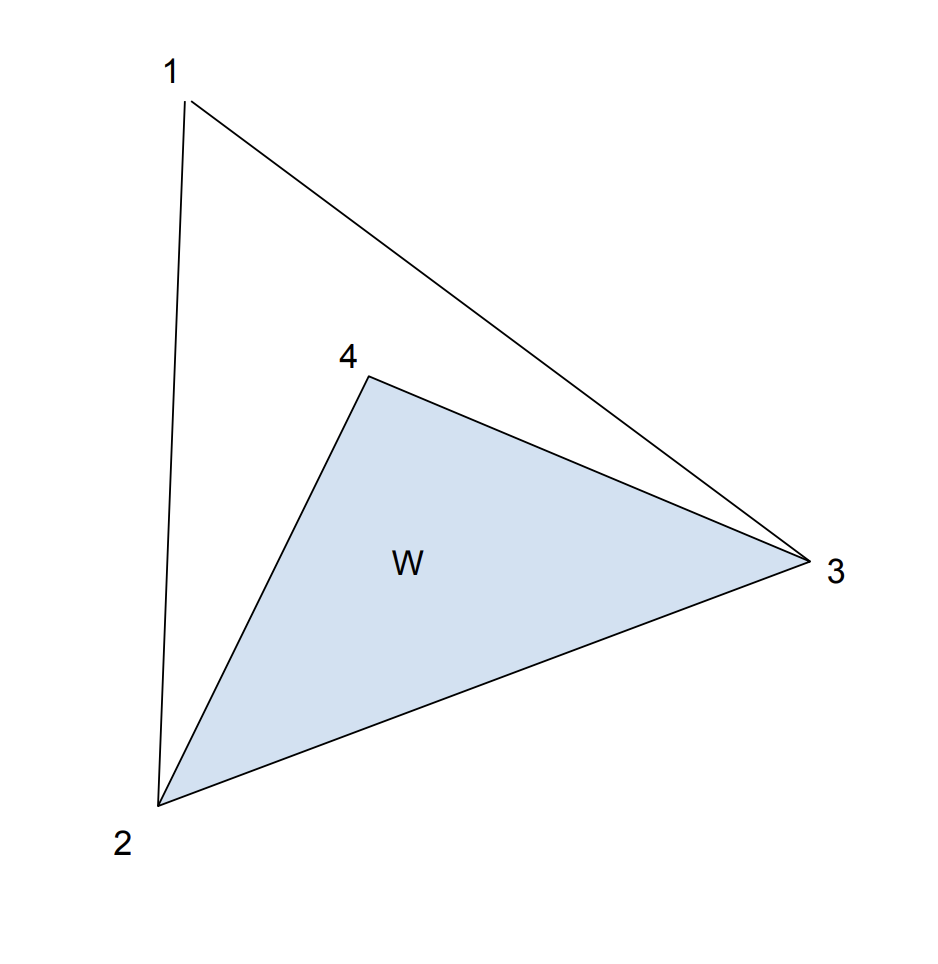}
    \caption{Diagram for Example 15}

\end{figure}
\begin{figure}[htp]\centering
    \begin{tikzpicture}[
        scale=3,line width=1pt,
        node distance=0.6cm and 1.0cm, 
        triangle/.style={draw, shape=regular polygon, regular polygon sides=3, minimum size=1cm, inner sep=1pt, align=center},
        upside down/.style={shape border rotate=180},
        every path/.style={->, thick}
    ]

    \node[triangle] (1) at (0,0) {0,1};
    \node[triangle] (2) at (0,1) {3,2};
    \node[triangle, upside down] (3) at (1,0) {1,0};
    \node[triangle] (4) at (1,1) {1,0};
    \node[triangle, upside down] (5) at (2,0) {1,0};
    \node[triangle] (6) at (2,1) {1,0};
    \node[triangle] (7) at (1,2) {1,0};
    \path (7) edge (2);
    \path (7) edge (4);
    \path (7) edge (6);
    \path (2) edge (1);
    \path (2) edge (3);
    \path (4) edge (3);
    \path (4) edge (5);
    \path (6) edge (1);
    \path (6) edge (5);
    \end{tikzpicture}
\end{figure}
\begin{example}
    Figure above shows a point set $S = {1,2,3,4}$ and $W = {2,3,4}$ in $\mathbb{R}^2$. The maximally non-generating sets are $\mathcal{M} = \{\{2,3\}, \{1,2,4\}, \{1,3,4\}\}$, so \newline $\mathcal{I} = \{\{2,3\}, \{1,2,4\}, \{1,3,4\}, \{2\}, \{1,4\}, \{3\},\emptyset\}$ and $\Phi = \emptyset$. Therefore, the we get the structure diagram shown below, so the nim number of the game is 1.
\end{example}
Using these we can create structural diagrams. In this paper, we use the conventions of structure diagrams used in \cite{McCoy}.

\begin{lemma}
    If structure class $X_I$ is terminal, then $\text{type}(X_I)$ must be either $(0,0,1)$ or $(1,1,0)$.
\end{lemma}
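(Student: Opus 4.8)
The plan is to analyze what it means for a structure class $X_I$ to be terminal and then apply Theorem 14 (the recursion for types) to this degenerate case. A structure class $X_I$ is terminal exactly when $\mathrm{Opt}(X_I) = \emptyset$, i.e.\ no position $P \in X_I$ has a legal move. First I would unwind the recursion from Theorem 14 under the hypothesis $\mathrm{Opt}(X_I) = \emptyset$: the first relation gives $\mathrm{nim}_{\mathrm{pty}(I)}(X_I) = \mathrm{mex}(\emptyset) = 0$, and then the second relation gives $\mathrm{nim}_{1-\mathrm{pty}(I)}(X_I) = \mathrm{mex}(\emptyset \cup \{0\}) = \mathrm{mex}(\{0\}) = 1$. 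So in a terminal class the even/odd nim values are forced: the parity-$\mathrm{pty}(I)$ positions have nim number $0$ and the opposite-parity positions have nim number $1$.

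Next I would simply split into the two cases for $\mathrm{pty}(I) \in \{0,1\}$ and read off $\mathrm{type}(X_I) = (\mathrm{pty}(I), \mathrm{nim}_0(X_I), \mathrm{nim}_1(X_I))$. If $\mathrm{pty}(I) = 0$, then $\mathrm{nim}_0(X_I) = 0$ and $\mathrm{nim}_1(X_I) = 1$, giving type $(0,0,1)$. If $\mathrm{pty}(I) = 1$, then $\mathrm{nim}_1(X_I) = 0$ and $\mathrm{nim}_0(X_I) = 1$, giving type $(1,1,0)$. Those are precisely the two claimed possibilities, so the lemma follows directly.

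One subtlety worth addressing explicitly is that a terminal class in an \emph{avoidance} game is not the trivially empty position but rather a maximally non-generating set $M$: the position $P = M$ itself is legal (since $W \not\subseteq \tau(M)$) but every extension is illegal, so $\mathrm{Opt}(M) = \emptyset$ and $M$ sits at the ``top'' of the structure diagram with $\lceil M \rceil = M$. I would note that such classes genuinely occur — every maximally non-generating set gives one — so the statement is not vacuous, and I would point out that this is exactly the place where the avoidance (misère-flavored) convention differs from the achievement game, where the terminal class is the single full generating configuration rather than the maximal non-generating sets.

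I do not anticipate a serious obstacle here; the only care needed is the bookkeeping of which parity receives which nim value, and making sure the appeal to Theorem 14 is legitimate (it is, since that theorem computes the type of \emph{every} class via the recursion, terminal classes included, with the convention $\mathrm{mex}(\emptyset)=0$). The ``hard part,'' such as it is, is just being careful that $\mathrm{Opt}(X_I)=\emptyset$ really is equivalent to $X_I$ being terminal as a class — which follows from the definition $\mathrm{Opt}(X_I) = \{X_J \mid J \in \mathrm{Opt}(P),\ P \in X_I\}$ together with the fact that structure-equivalent positions have structure-equivalent option sets.
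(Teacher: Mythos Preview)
Your proof is correct and follows essentially the same approach as the paper: both argue that in a terminal class the positions of parity $\mathrm{pty}(I)$ have nim value $0$ and the opposite-parity positions have nim value $1$, then split on $\mathrm{pty}(I)\in\{0,1\}$. The paper states this directly (``the even positions are immediately losing''), whereas you route the same computation through the recursion of Theorem~14 with $\mathrm{Opt}(X_I)=\emptyset$; the extra paragraph identifying terminal classes with maximally non-generating sets is accurate supplementary commentary but not needed for the argument.
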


\begin{proof}
    Suppose that $\text{pty}(X_I) = 0$. Then, the even positions are immediately losing for the first player as $I$ is terminal. Therefore, $\text{type}(X_I) = (0,0,1)$. When $\text{pty}(X_I) = 1$, the odd positions are immediately losing so the type is $(1,1,0)$. 
\end{proof}
\section{Winning Subsets that Only Includes Extreme Points}
In this section, we characterize the nim numbers of $\text{DNG}(S,W)$ where $\tau(W) \subseteq \text{Ex}(S)$. 

\begin{proposition}
    The nim numbers of $\text{DNG}(S,W), \text{DNG}(S,\text{Ex}(W)), \text{DNG}(S,\tau(W))$ are the same.
\end{proposition}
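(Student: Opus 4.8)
The plan is to reduce the statement to a single combinatorial fact about the closure operator: that the three winning sets $W$, $\text{Ex}(W)$, and $\tau(W)$ determine exactly the same collection of legal positions. Note that $\text{DNG}(S,W)$, $\text{DNG}(S,\text{Ex}(W))$, and $\text{DNG}(S,\tau(W))$ are all played on the same convex geometry $(S,\mathcal{K})$: a position is a subset $P\subseteq S$, and $P\cup\{p\}$ is an option of $P$ exactly when $p\notin P$ and $P\cup\{p\}$ is non-generating for the winning set in question. Hence it suffices to prove that for every $P\subseteq S$,
\[
W\subseteq\tau(P)\iff \text{Ex}(W)\subseteq\tau(P)\iff \tau(W)\subseteq\tau(P);
\]
once this holds, the three games have literally the same position set and the same option sets, so the recursion $\text{nim}(P)=\text{mex}(\text{nim}(\text{Opt}(P)))$ assigns all of them the same value, and in particular their starting positions agree.

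First I would dispatch the equivalence with $\tau(W)$ using only the closure-operator axioms. If $W\subseteq\tau(P)$, then monotonicity and idempotence give $\tau(W)\subseteq\tau(\tau(P))=\tau(P)$; conversely $W\subseteq\tau(W)\subseteq\tau(P)$ by extensivity. The same argument shows, more generally, that whenever $A\subseteq B\subseteq\tau(A)$ one has $A\subseteq\tau(P)\iff B\subseteq\tau(P)$, and I will reuse this observation.

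Next I would handle $\text{Ex}(W)$. Since $\text{Ex}(W)\subseteq W\subseteq\tau(W)$, the reusable observation applies as soon as I establish $\tau(\text{Ex}(W))=\tau(W)$, equivalently $W\subseteq\tau(\text{Ex}(W))$. To prove this, pick $B\subseteq W$ minimal with respect to inclusion among subsets with $\tau(B)=\tau(W)$. For $b\in B$, minimality forces $\tau(B\setminus\{b\})\subsetneq\tau(W)=\tau(B)$, which in turn forces $b\notin\tau(B\setminus\{b\})$ (otherwise $\tau(B)\subseteq\tau(\tau(B\setminus\{b\}))=\tau(B\setminus\{b\})$). I would then upgrade this to $b\notin\tau(W\setminus\{b\})$, i.e.\ $b\in\text{Ex}(W)$: add the elements of $W\setminus B$ to $B\setminus\{b\}$ one at a time, let $c$ be the element whose addition first pulls $b$ into the closure, and apply the anti-exchange property of convex geometries to the sets involved; this yields $c\notin\tau\big((B\setminus\{b\})\cup\{b\}\cup\cdots\big)\supseteq\tau(B)=\tau(W)$, contradicting $c\in W\subseteq\tau(W)$. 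Hence $B\subseteq\text{Ex}(W)\subseteq W$, so $\tau(W)=\tau(B)\subseteq\tau(\text{Ex}(W))\subseteq\tau(W)$, giving the equality.

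The one nontrivial ingredient — and the place where I expect to do real work or to appeal to the literature — is the Krein–Milman-type identity $\tau(\text{Ex}(W))=\tau(W)$, which requires extracting the anti-exchange property from the defining axioms of a convex geometry; this is a standard equivalence and is available in \cite{Ahrens}. Everything else is bookkeeping: modulo that identity the three games are the very same game, so their nim numbers coincide.
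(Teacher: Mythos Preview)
Your proposal is correct and follows the same route as the paper: both argue that the three winning sets yield identical legal positions because $\tau(W)=\tau(\text{Ex}(W))=\tau(\tau(W))$, whence the games coincide and so do their nim numbers. The only difference is that you actually prove the Krein--Milman identity $\tau(\text{Ex}(W))=\tau(W)$ via a minimal-generating-set and anti-exchange argument, whereas the paper simply asserts it.
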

\begin{proof}
    Suppose we have a position $P \subseteq S$. Notice that $\tau(W) = \tau(\text{Ex}(W)) = \tau(\tau(W))$. Therefore, $\text{Ex}(W) \not\subseteq \tau(P) \implies \tau(W) = \tau(\text{Ex}(W)) \not\subseteq \tau(\tau(P)) = \tau(P)$. We can similarly prove that one of the conditions imply the other, so we are done.
\end{proof}

Proposition 7 lets us assume that $W \subseteq \text{Ex}(S)$. We first look at the special case when $W = \text{Ex}(S)$. Because of the definition of an extreme point, $\mathcal{M} = \{S \backslash \{v\} \mid v \in \text{Ex}(S)\}$ and therefore $\mathcal{I} = \{S \backslash V \mid V \subseteq S\}$.

\begin{proposition}
    If $W = \text{Ex}(S)$, then 
    \[
\text{nim}(\text{DNG}(S, W)) =  
   \begin{dcases} 
      1, & \text{pty}(S) = 0 \\
      0, & \text{pty}(S) = 1 \\
   \end{dcases}
    \]
\end{proposition}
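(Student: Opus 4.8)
The plan is to analyze $\text{DNG}(S,\text{Ex}(S))$ directly rather than through its (somewhat unwieldy) structure diagram, exploiting the fact that extreme points are ``visible'' to the closure operator. The first ingredient I would establish is that for every $P\subseteq S$ and every $v\in\text{Ex}(S)$ one has $v\in\tau(P)$ if and only if $v\in P$: the nontrivial direction holds because $v\notin\tau(S\setminus\{v\})$ by the definition of an extreme point, while $\tau$ is monotone, so $v\notin P$ gives $\tau(P)\subseteq\tau(S\setminus\{v\})$, whence $v\notin\tau(P)$. Consequently, since $W=\text{Ex}(S)$, a position $P$ is generating precisely when $\text{Ex}(S)\subseteq P$. (Recall every position arising in play is non-generating, as $\emptyset$ is non-generating because $W\neq\emptyset$, and no legal move creates a generating set.)

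Using this, I would prove the key lemma: a position $P$ in the game is terminal if and only if $|P|=|S|-1$. Since $P$ is non-generating it omits at least one extreme point; if moreover $|P|\le|S|-2$, then there are at least two unchosen points and a legal move always exists — if $P$ omits two or more extreme points, every unchosen point is legal, and if it omits exactly one extreme point $v$, every unchosen point other than $v$ is legal, and such a point exists. Conversely, if $|P|=|S|-1$ then $P=S\setminus\{v\}$ where $v$ must be extreme (otherwise $P$ would be generating), and the only available move, adding $v$, is illegal. Combined with the trivial fact that every option of $P$ has size $|P|+1$, this shows the game digraph is graded by cardinality: every maximal play lasts exactly $|S|-1$ moves.

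I would then finish by downward induction on $d:=|S|-1-|P|$, showing $\text{nim}(P)=\text{pty}(d)$. The base case $d=0$ is a terminal position with nim number $0=\text{pty}(0)$. For $d>0$ the position $P$ is non-terminal, so it has at least one option, and every option $Q$ satisfies $|S|-1-|Q|=d-1$; hence by induction all options share the same nim number $c:=\text{pty}(d-1)\in\{0,1\}$, so $\text{nim}(P)=\text{mex}\{c\}=1-c=\text{pty}(d)$. Taking $P=\emptyset$ yields $\text{nim}(\text{DNG}(S,\text{Ex}(S)))=\text{pty}(|S|-1)=1-\text{pty}(S)$, which is exactly the stated dichotomy. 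The same value can be extracted from the structure diagram, but the cardinality grading makes that detour unnecessary; the one genuine obstacle is the terminal-position lemma — that is, ruling out plays that stall before cardinality $|S|-1$ — after which the nim computation is mechanical.
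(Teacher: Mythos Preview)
Your proof is correct and takes a genuinely different route from the paper's. The paper works through the structure-diagram machinery: it identifies $\mathcal{M}=\{S\setminus\{v\}\mid v\in\text{Ex}(S)\}$ and the associated intersection subsets, then inducts along the orbit quotient diagram to determine $\text{type}(X_I)$ for each $I\in\mathcal{I}$, finally reading off the nim number of the initial position from the type of the Frattini class. You bypass all of that by proving directly that the game is \emph{graded}---every maximal play from $\emptyset$ has exactly $|S|-1$ moves---after which the nim computation collapses to a one-line parity argument on $d=|S|-1-|P|$. Your terminal-position lemma (that a non-generating $P$ is terminal iff $|P|=|S|-1$) is the real content, and your case split on how many extreme points $P$ omits handles it cleanly. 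The payoff of your approach is a self-contained, elementary argument that makes transparent why only $\text{pty}(S)$ matters; the paper's approach, while heavier for this proposition, exercises the type-calculus framework that becomes indispensable for the later tree results, where plays no longer have uniform length and the structure diagram cannot be avoided.
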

\begin{figure}[htp]\centering
    \begin{tikzpicture}[
        scale=3,line width=1pt,
        node distance=0.6cm and 1.0cm, 
        triangle/.style={draw, shape=regular polygon, regular polygon sides=3, minimum size=1cm, inner sep=1pt, align=center},
        upside down/.style={shape border rotate=180},
        every path/.style={->, thick}
    ]

    \node[triangle, upside down] (11) at (0,0) {1,0};
    \node[triangle] (21) [above =of 11] {1,0};
    \node[triangle, upside down] (31) [above =of 21] {1,0};
    \node[triangle] (41) [above =of 31] {1,0};
    \node[triangle] (12) at (2,0) {0,1};
    \node[triangle, upside down] (22) [above =of 12] {0,1};
    \node[triangle] (32) [above =of 22] {0,1};
    \node[triangle, upside down] (42) [above =of 32] {0,1};

    \path (22) edge (12);
    \path (32) edge (22);
    \path (42) edge (32);

    \path (21) edge (11);
    \path (31) edge (21);
    \path (41) edge (31);
    \end{tikzpicture}
\end{figure}
\begin{proof}
    We proceed by induction on the orbit quotient structure diagram. When $\text{pty}(S) = 0$, maximum non-generating sets are of even parity as they are one point removed from $S$. By induction, $\text{type}(X_I) = (0,1,0)$ or $(1,1,0)$ Hence, the orbit quotient diagram gives us $\text{nim}(\text{DNG}(S,W)) = 1$ by induction, as shown in the figure. When $\text{pty}(S) = 1$, the maximum non-generating sets are of odd parity with similar reasoning. Thus, the orbit quotient diagram gives $\text{type}(X_I) = (0,0,1)$ or $(1,0,1)$ so $\text{nim}(\text{DNG}(S,W)) = 0$ by induction, as shown in the figure.
\end{proof}

We can generalize this result to winning sets where $W \subseteq \text{Ex}(S)$. Next few results are from \cite{McCoy}:

\begin{proposition}
    If $W \subseteq \text{Ex}(S)$, then the set $\mathcal{M}$ of maximally non-generating subsets of $\text{DNG}(S,W)$ is $\{M_v \mid v \in W\}$, where $M_v := S \backslash \{v\}$. 
\end{proposition}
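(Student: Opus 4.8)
The plan is to establish the set equality by proving the two inclusions $\{M_v \mid v \in W\} \subseteq \mathcal{M}$ and $\mathcal{M} \subseteq \{M_v \mid v \in W\}$ separately. Throughout I would use only three facts: that a subset $N \subseteq S$ is generating exactly when $W \subseteq \tau(N)$, that $\tau$ is extensive (so $N \subseteq \tau(N)$), and that $\tau(S) = S$. The definition of extreme point enters through the single implication that $v \in \text{Ex}(S)$ forces $v \notin \tau(S \setminus \{v\})$.

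For the first inclusion I would fix $v \in W$ and check that $M_v = S \setminus \{v\}$ is maximally non-generating. Since $v \in W \subseteq \text{Ex}(S)$, the definition of an extreme point gives $v \notin \tau(S \setminus \{v\}) = \tau(M_v)$; as $v \in W$, this shows $W \not\subseteq \tau(M_v)$, so $M_v$ is non-generating. The only subset $N$ with $M_v \subsetneq N \subseteq S$ is $N = S$ itself, and $\tau(S) = S \supseteq W$ makes $S$ generating. Hence every proper superset of $M_v$ in $S$ is generating, i.e. $M_v \in \mathcal{M}$.

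For the converse, I would take an arbitrary $M \in \mathcal{M}$. Being non-generating, $W \not\subseteq \tau(M)$, so I can pick some $v \in W \setminus \tau(M)$. Extensivity of $\tau$ gives $M \subseteq \tau(M)$, hence $v \notin M$, so $M \subseteq S \setminus \{v\} = M_v$. By the first part $M_v$ is itself non-generating, and it is a subset of $S$ containing $M$; since $M$ is maximally non-generating, no non-generating proper superset of $M$ inside $S$ can exist, so $M = M_v$. As $v \in W$, this exhibits $M$ as a member of $\{M_v \mid v \in W\}$, finishing the proof.

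This argument is essentially immediate once the two bookkeeping facts --- "$v$ extreme in $S$ implies $v \notin \tau(S \setminus \{v\})$" and "$M$ non-generating implies some $v \in W$ lies outside $\tau(M)$" --- are lined up, so I do not expect a genuine obstacle. The only point needing a moment's care is ensuring that the $v$ extracted from $W \setminus \tau(M)$ actually lies outside $M$, so that $M \subseteq M_v$; this is exactly where extensivity of $\tau$ is invoked, and without it the maximality step would not go through.
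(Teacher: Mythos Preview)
Your proof is correct and follows essentially the same two-inclusion argument as the paper's. The only cosmetic difference is that you extract $v \in W \setminus \tau(M)$ and then invoke extensivity to get $v \notin M$, whereas the paper jumps directly to $v \in W \setminus M$; your version is if anything slightly more careful on that point.
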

\begin{proof}

    We first show that for each \( v \in W \), \( M_v \) is a maximally non-generating set. Since \( v \) is an extreme point of \( S \), it follows that \( v \notin \tau(M_v) \), implying that \( M_v \) is a non-generating set. Moreover, if \( M_v \subset N \subseteq S \), then necessarily \( N = S \), which is a generating set. This confirms that \( M_v \) is maximally non-generating.  
    
    Next, we establish that every maximally non-generating set must be of the form \( M_v \) for some \( v \in W \). Let \( M \) be a maximally non-generating set. Since \( M \) is non-generating, there must exist some \( v \in W \setminus M \). Clearly, we have \( M \subseteq M_v \), meaning \( M_v \) is a non-generating superset of \( M \). Given that \( M \) is maximally non-generating, it must be that \( M = M_v \).  
\end{proof}

\begin{corollary}
    If $W \subseteq \text{Ex}(S)$, then the set of intersection subsets of $\text{DNG}(S,W)$ is $\mathcal{I} = \{S \backslash V \mid V \subseteq W \}$. \newline
\end{corollary}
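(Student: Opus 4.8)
The plan is to read $\mathcal{I}$ off directly from the preceding proposition, using nothing more than De~Morgan's law for intersections. That proposition gives the family of maximally non-generating sets as $\mathcal{M}=\{M_v\mid v\in W\}$ with $M_v=S\setminus\{v\}$, and since $v\mapsto M_v$ is a bijection from $W$ onto $\mathcal{M}$, every subfamily $\mathcal{N}\subseteq\mathcal{M}$ has the form $\{M_v\mid v\in V\}$ for a unique $V\subseteq W$. The first step is to compute the intersection of such a subfamily:
\[
\bigcap_{v\in V}M_v=\bigcap_{v\in V}\bigl(S\setminus\{v\}\bigr)=S\setminus\bigcup_{v\in V}\{v\}=S\setminus V .
\]
The second step is then immediate: as $\mathcal{N}$ (equivalently $V$) ranges over all subfamilies of $\mathcal{M}$, the resulting intersections range over exactly the sets $S\setminus V$ with $V\subseteq W$, which yields both inclusions and hence the equality $\mathcal{I}=\{S\setminus V\mid V\subseteq W\}$. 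The same computation, applied to $V=W$, also identifies the Frattini subset as $\Phi=\bigcap_{v\in W}M_v=S\setminus W$.

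The one point requiring care is the empty subfamily $V=\emptyset$, for which the displayed identity formally returns $S$: by the convention adopted when $\mathcal{I}$ was defined, the empty intersection is not placed in $\mathcal{I}$ for an avoidance game, so the statement is to be read with $S=S\setminus\emptyset$ regarded as the (excluded) top element, exactly as in the special case $W=\text{Ex}(S)$ discussed earlier in this section. I do not expect any real obstacle beyond this bookkeeping remark, since the corollary is a direct unwinding of the preceding proposition together with one application of De~Morgan's law.
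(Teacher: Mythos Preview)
Your argument is correct and is exactly the intended one: the paper states the corollary without proof because it follows immediately from the preceding proposition by De~Morgan's law, precisely as you write. Your remark about the $V=\emptyset$ case and the convention excluding $\bigcap\emptyset$ from $\mathcal{I}$ is a fair bookkeeping observation, since the paper's statement of the corollary is slightly loose on this point.
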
 

Now, we can get to our main result of this section: 

\begin{definition}
    For $I \in \mathcal{I}$, we define $\delta(I) := |S\backslash I|$. In other words, it is the number of points missing from $I$.
\end{definition}

\begin{proposition}
    If $W \subseteq \text{Ex}(S)$, 
    \[
\text{nim}(\text{DNG}(S, W)) =  
   \begin{dcases} 
      1, & \text{pty}(S) = 0 \\
      0, & \text{pty}(S) = 1 \\
   \end{dcases}
    \]
\end{proposition}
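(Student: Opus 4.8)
The plan is to prove Proposition 21 by structural induction on the orbit quotient structure diagram, mirroring the argument already carried out in Proposition 14 for the special case $W = \mathrm{Ex}(S)$, but now organized around the invariant $\delta(I)$. By Corollary 12 the intersection lattice is $\mathcal{I} = \{S \setminus V \mid V \subseteq W\}$, so every structure class $X_I$ is indexed by a subset $V = S \setminus I \subseteq W$, and $\delta(I) = |V|$ counts how many points of $W$ have been discarded. The options of a class $X_I$ are exactly the classes $X_J$ with $I \subsetneq J \subseteq S$ and $J \in \mathcal{I}$, i.e. those obtained by returning some nonempty subset of the missing points; crucially, from a generic position $P$ with $\lceil P \rceil = I$ one can always play a point of $S \setminus I$ that lies outside $W$ (since $|S \setminus I| \ge 1$ and, as long as $I \ne S$, adding any single non-$W$ point keeps us non-generating), so every non-terminal class does have options, and the terminal classes are precisely those with $\delta(I) = 0$, namely $X_S$ itself — wait, more carefully, the terminal classes are the maximal elements $M_v$, which have $\delta = 1$. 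I would first pin down exactly which classes are terminal: a position $P$ is terminal iff $\lceil P\rceil = M_v$ for some $v\in W$ and $P = M_v$, because any proper subset of $M_v$ still admits a legal move. So the terminal classes are the $n-1$ classes $X_{M_v}$, each with $\delta = 1$.

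The key claim to establish by induction (downward on $\delta$, or equivalently: handle the maximal classes first, then work toward $I = S$) is a uniform formula for $\mathrm{type}(X_I)$ depending only on the parities of $\mathrm{pty}(S)$ and $\delta(I)$. Concretely I expect: when $\mathrm{pty}(S) = 0$, every class has type $(pty(I),1,0)$ — that is, $\mathrm{nim}_0 = 1$ and $\mathrm{nim}_1 = 0$ regardless of $I$; and when $\mathrm{pty}(S) = 1$, every class has type $(pty(I),0,1)$. I would verify the base case using Lemma 6: a terminal class $X_{M_v}$ has $\mathrm{pty}(M_v) = \mathrm{pty}(S) - 1 \pmod 2$, so if $\mathrm{pty}(S) = 0$ then $M_v$ has odd parity and Lemma 6 forces $\mathrm{type} = (1,1,0)$, which matches; if $\mathrm{pty}(S) = 1$ then $M_v$ has even parity and Lemma 6 forces $(0,0,1)$, again matching. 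For the inductive step I take a non-terminal class $X_I$, note its options $X_J$ all satisfy (by the inductive hypothesis) $\mathrm{nim}_0(X_J) = 1, \mathrm{nim}_1(X_J) = 0$ in the $\mathrm{pty}(S)=0$ case, then feed these into the two recursion relations of Theorem 5. Since $\mathrm{pty}(I)$ and $1 - \mathrm{pty}(I)$ just select which of $\{0,1\}$ we read off from each option, the mex computations collapse: $\mathrm{mex}$ of a set that is either $\{0\}$ or $\{1\}$ (or a union with one already-computed value) yields exactly the claimed $1$ and $0$. The $\mathrm{pty}(S) = 1$ case is symmetric with the roles of $0$ and $1$ swapped.

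The main obstacle, I expect, is bookkeeping the parity indices correctly in Theorem 5's two-line recursion — in particular making sure that the self-referential second equation ($\mathrm{nim}_{1-\mathrm{pty}(I)}$ depends on $\mathrm{nim}_{\mathrm{pty}(I)}(X_I)$, which was just computed in the first equation) does not break the uniform formula, and checking that a class $X_I$ genuinely has options of both parities available — since returning $k$ missing points changes parity by $k \bmod 2$ and $k$ ranges over $1, \dots, \delta(I)$, as soon as $\delta(I) \ge 2$ both parities of option appear, while if $\delta(I) = 1$ the class is terminal, so this is automatically fine. A secondary subtlety is confirming that $\lceil \cdot \rceil$ really does send a non-maximal position to a non-maximal element of $\mathcal{I}$ and that the option-class set is as described (this is where the "ignore $\bigcap(\emptyset)$" convention from the Frattini-subset definition matters, since there is no generating terminal position). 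Once those are nailed down, reading the nim number of the whole game off as $\mathrm{nim}_{\mathrm{pty}(S)}(X_S)$ gives $1$ when $\mathrm{pty}(S) = 0$ and $0$ when $\mathrm{pty}(S) = 1$, completing the proof; the accompanying figure (two disjoint ladders of triangles labeled $1,0$ and $0,1$) is exactly the picture of this collapsed recursion.
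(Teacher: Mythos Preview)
Your approach is essentially the paper's: structural induction on $\delta(I)$ via the type calculus of Theorem~5, with Lemma~6 supplying the base case at the terminal classes $X_{M_v}$, and the uniform formula $\mathrm{type}(X_I)=(\mathrm{pty}(I),1,0)$ when $\mathrm{pty}(S)=0$ (respectively $(\mathrm{pty}(I),0,1)$ when $\mathrm{pty}(S)=1$) is exactly what the paper establishes.

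Two small inaccuracies are worth fixing before you write it up. First, the option classes of $X_I$ are only those $X_J$ with $J=I\cup\{p\}$ for a single $p\in S\setminus I$, i.e.\ $\delta(J)=\delta(I)-1$; you cannot ``return'' several missing points in one move, so all options have the same parity (opposite to $I$), not both. This is harmless here because every such $X_J$ carries the same nim pair by induction, so the mex is unaffected. Second, the game's nim number is $\mathrm{nim}_0(X_\Phi)$ with $\Phi=\lceil\emptyset\rceil=S\setminus W$, not $\mathrm{nim}_{\mathrm{pty}(S)}(X_S)$; indeed $S$ is generating, so $X_S$ is not a structure class at all. Since the starting position $\emptyset$ has parity $0$, your uniform formula then reads off $\mathrm{nim}_0=1$ or $0$ according to $\mathrm{pty}(S)$, as required.
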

\begin{figure}[htp]\centering
    \begin{tikzpicture}[
        scale=3,line width=1pt,
        node distance=0.6cm and 1.0cm, 
        triangle/.style={draw, shape=regular polygon, regular polygon sides=3, minimum size=1cm, inner sep=1pt, align=center},
        upside down/.style={shape border rotate=180},
        every path/.style={->, thick}
    ]

    \node[triangle, upside down] (11) at (0,0) {1,0};
    \node[triangle] (21) [above =of 11] {1,0};
    \node[triangle, upside down] (31) [above =of 21] {1,0};
    \node[triangle] (41) [above =of 31] {1,0};
    \node[triangle] (12) at (2,0) {0,1};
    \node[triangle, upside down] (22) [above =of 12] {0,1};
    \node[triangle] (32) [above =of 22] {0,1};
    \node[triangle, upside down] (42) [above =of 32] {0,1};

    \path (22) edge (12);
    \path (32) edge (22);
    \path (42) edge (32);

    \path (21) edge (11);
    \path (31) edge (21);
    \path (41) edge (31);
    \end{tikzpicture}
\end{figure}
\begin{proof}
    We are going to use type calculus and structural induction to prove this result. First, we consider when $\text{pty}(S) = 0$. This can be visualized in the right diagram above. We are going to prove that in this case, 
    \[
    \text{type}(X_I) = \begin{dcases}
        (0,0,1), & \delta(I) \equiv1 \pmod2 \\
        (1,0,1), & \delta(I) \equiv 0 \pmod2\\
    \end{dcases}
    \]
    First, note that we start with the maximally non-generating set which means $\delta(I) = 1$ and has type calculus $(1,0,1)$ since $\text{pty}(I) = 1-\text{pty}(S) = 1$. This will serve as our base case. We assume that the result holds when $\delta(I) = 2k+1$ for a nonnegative integer $k$. When $\delta(I) = 2k+2$, we get 
    \begin{equation}
        \begin{split}
            \text{type}(X_I)  & = (0,\text{nim}_0(X_I), \text{nim}_1(X_I)) \\
             & = (0,\text{mex}(\text{nim}_1(\text{Opt}(I))), \text{mex}(\text{nim}_1(\text{Opt}(I)))\cup {\text{nim}_0(X_I)})\\
             & = (0, \text{mex}(0), \text{mex}(1)) \\
             & = (0,1,0)
        \end{split}
    \end{equation}

    \noindent Next, when $\delta(I) = 2k+3$, we get that
    \begin{equation}
        \begin{split}
            \text{type}(X_I)  & = (1,\text{nim}_0(X_I), \text{nim}_1(X_I)) \\
             & = (1,\text{mex}(\text{nim}_1(\text{Opt}(I)))\cup {\text{nim}_1(X_I)}), \text{mex}(\text{nim}_0(\text{Opt}(I)))\\
             & = (1, \text{mex}(0), \text{mex}(1)) \\
             & = (1,1,0)
        \end{split}
    \end{equation}
    Therefore, our induction is complete. 

    Now consider the cases when $\text{pty}(S) = 1$. This can be visualized as left diagram above. We can similarly prove that 
    \[
    \text{type}(X_I) = \begin{dcases}
        (1,1,0), & \delta(I) \equiv1 \pmod2 \\
        (0,1,0), & \delta(I) \equiv 0 \pmod2\\
    \end{dcases}
    \]
    with induction, which is essentially the same as the argument above with the parities change. 

\end{proof}
\begin{corollary}
    The spectrum of nim numbers when $\tau(W) \subseteq \text{Ex}(S)$ is $\{0,1\}$.
\end{corollary}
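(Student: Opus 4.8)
The plan is to obtain this as an immediate consequence of the two preceding results, so the proof will be short. First I would reduce to the case already handled: given any convex geometry $(S,\mathcal{K})$ and nonempty winning set $W$ with $\tau(W) \subseteq \text{Ex}(S)$, set $W' := \tau(W)$. Then $W'$ is a nonempty subset of $S$ satisfying $W' \subseteq \text{Ex}(S)$, so the preceding Proposition applies to $\text{DNG}(S,W')$ and yields $\text{nim}(\text{DNG}(S,W')) \in \{0,1\}$ (it equals $1$ when $\text{pty}(S)=0$ and $0$ when $\text{pty}(S)=1$). By Proposition~7 the nim number is unchanged when $W$ is replaced by $\tau(W)$, i.e.\ $\text{nim}(\text{DNG}(S,W)) = \text{nim}(\text{DNG}(S,W')) \in \{0,1\}$. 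This shows the spectrum is contained in $\{0,1\}$.

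Next I would check that both values actually occur. For nim number $1$, take $S=\{1,2\}$ with $\mathcal{K}=2^S$ the free convex geometry (so every point is extreme) and $W=\{1\}$; here $\tau(W)=\{1\}\subseteq\text{Ex}(S)$ and $\text{pty}(S)=0$, so by the preceding Proposition the game has nim number $1$. For nim number $0$, take $S=\{1,2,3\}$ with $\mathcal{K}=2^S$ and $W=\{1\}$; here $\tau(W)\subseteq\text{Ex}(S)$ and $\text{pty}(S)=1$, so the game has nim number $0$. Combining the two inclusions gives that the spectrum is exactly $\{0,1\}$.

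There is no real obstacle here — the content is entirely carried by the earlier Proposition and by Proposition~7. The only points that need a word of care are that $\tau(W)$ literally satisfies the hypothesis $\tau(W)\subseteq\text{Ex}(S)$ needed to invoke the earlier Proposition, that Proposition~7 justifies passing from $W$ to $\tau(W)$, and that $\tau(W)$ is nonempty (since $W\subseteq\tau(W)$ and $W\neq\emptyset$), so that $\text{DNG}(S,\tau(W))$ is a legitimate instance of the avoidance game.
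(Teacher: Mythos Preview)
Your proof is correct and follows exactly the route the paper intends: reduce to the case $W'=\tau(W)\subseteq\text{Ex}(S)$ via Proposition~7, then invoke the preceding Proposition to get $\text{nim}\in\{0,1\}$. The paper states the corollary without proof, so your argument is the natural filling-in; your explicit examples showing that both $0$ and $1$ are attained are a welcome addition, since the paper leaves that realizability implicit.
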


\begin{proposition}
    The spectrum of nim numbers for the sum of games where $\tau(W) \subseteq \text{EX}(S)$ is $\{0,1\}$.
\end{proposition}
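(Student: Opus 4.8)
The plan is to reduce the statement to the single-game computation just established, using the additivity of nim numbers under sums recalled in Section~2. Here a \emph{sum of games of this type} means a game $G = \text{DNG}(S_1,W_1) + \cdots + \text{DNG}(S_n,W_n)$ in which each summand satisfies $\tau(W_i) \subseteq \text{Ex}(S_i)$, and we must show both that $\text{nim}(G) \in \{0,1\}$ for every such $G$ and that each of $0$ and $1$ is attained.

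First I would iterate the formula $\text{nim}(P+R) = \text{nim}(P)\oplus\text{nim}(R)$ to obtain $\text{nim}(G) = \text{nim}(\text{DNG}(S_1,W_1)) \oplus \cdots \oplus \text{nim}(\text{DNG}(S_n,W_n))$. Next, by the preceding proposition (the single-game computation for $W \subseteq \text{Ex}(S)$), combined with Proposition~7, which reduces the hypothesis $\tau(W_i) \subseteq \text{Ex}(S_i)$ to the case $W_i \subseteq \text{Ex}(S_i)$, each $\text{nim}(\text{DNG}(S_i,W_i))$ equals $1$ when $\text{pty}(S_i) = 0$ and $0$ when $\text{pty}(S_i) = 1$; in particular every summand lies in $\{0,1\}$. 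Then I would observe that the bitwise XOR of finitely many elements of $\{0,1\}$ is again in $\{0,1\}$, equalling $1$ precisely when an odd number of the $S_i$ have even parity, so $\text{nim}(G) \in \{0,1\}$ for all such $G$, which is one inclusion of the spectrum. For the reverse inclusion, a single-summand sum with $\text{pty}(S_1)$ odd realizes nim number $0$ (for instance $S_1$ a triangle in $\mathbb{R}^2$ with $W_1 = \text{Ex}(S_1)$, its three vertices), while a single-summand sum with $\text{pty}(S_1)$ even realizes nim number $1$ (for instance $S_1$ four points in convex position with $W_1 = \text{Ex}(S_1)$); hence the spectrum is exactly $\{0,1\}$.

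The only point requiring care, and the nearest thing to an obstacle, is the legitimacy of the additivity formula in this setting. Although the introduction calls these avoidance games ``misère,'' they are treated throughout as ordinary normal-play impartial games on a restricted move set: a player loses exactly when $\text{Opt}(P) = \emptyset$, i.e. when every remaining point would force $W \subseteq \tau(P)$. Consequently the nim-number recursion and the sum formula $\text{nim}(P+R) = \text{nim}(P)\oplus\text{nim}(R)$ of Section~2 apply verbatim, and no genuine misère-quotient machinery is needed; once this is granted, everything else is a one-line consequence of the single-game proposition. One may also phrase the conclusion memorably: such a game sum is a P-position if and only if an even number of its summands are played on ground sets of even size.
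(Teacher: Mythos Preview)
Your proposal is correct and follows essentially the same approach as the paper: both arguments invoke the single-game spectrum $\{0,1\}$, observe that $\{0,1\}$ is closed under bitwise XOR, and note that both values are attained. Your write-up is more detailed (explicit examples, the remark on normal-play legitimacy), but the core reasoning is identical.
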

\begin{proof}
    We use the bitwise XOR operator to compute the nim number of the sum of the game. $\{0,1\}$ are obviously obtainable as $0 \oplus 0 = 0$ and $0 \oplus 1 = 1$.  The set $\{0,1\}$ is closed under bitwise XOR as $0 \oplus 0 = 1$, $1 \oplus 0 = 1$, $1 \oplus 1 = 0$, and $0 \oplus 1 = 1$.  Therefore, we are done.
\end{proof}

\section{Vertex and Edge Geometry on Trees}
The concept of convex geometries can be naturally extended to a tree graph \( T \) with a vertex set \( S \). Specifically, the vertex sets of connected subgraphs of \( T \) constitute a convex geometry on \( S \). We refer to this structure as the \emph{vertex geometry} of \( T \). In this section, we study the avoidance game $\text{DNG}(S,W)$ on the vertex geometry and edge geometries of trees. 

\begin{figure}[htp]\centering
    \begin{tikzpicture}[
        scale=2,line width=1pt,
        node distance=0.3cm and 0.5cm, 
        every node/.style={draw, circle, align=center, minimum size=6pt, inner sep=1.2pt},
        every edge quotes/.style = {auto, font=\footnotesize, sloped}
    ]

    \node (1) at (0,0) {1};
    \node (2) [left =of 1] {2};
    \node (3) [left =of 2] {3};
    \node (4) [above left= of 3] {4};
    \node (5) [below left= of 3] {5};
    \node (6) [right =of 1] {6};
    \node (7) [above right =of 6] {7};
    \node (8) [above right =of 7] {8};
    \node (9) [below right =of 7] {9};
    \node (10) [below right =of 6] {10};
    \path (1) edge (2);
    \path (3) edge (2);
    \path (4) edge (3);
    \path (5) edge (3);
    \path (1) edge (6);
    \path (6) edge (7);
    \path (7) edge (8);
    \path (7) edge (9);
    \path (10) edge (6);

    \end{tikzpicture}
\end{figure}

\begin{example}
    An example of a convex set for the vertex geometry shown above are sets $\{1,2,3\}$, $ \{6,7,9,10\}$, and $\{1,2,3,4,6,7\}$ because they are connected subgraphs of the tree. However, sets such as $\{2,3,6,7,9\}$ and $\{4,10\}$ are not convex because the components are not connected by the edges of the tree.
\end{example}

We first study the nim numbers of a vertex geometry. We bring in some of the definitions and results introduced in \cite{McCoy} and adapt this into the $\text{DNG}$ game. 

\begin{definition}
    For a vertex $v$ in the tree graph $T$, let $N(v)$ denote the set of vertices adjacent to $v$.
\end{definition}

\begin{proposition}
     Let \( (S, \mathcal{K}) \) represent the vertex geometry of a tree \( T \), and suppose \( W \subseteq S \). Then, a vertex \( w \) belongs to \( Ex(W) \) if and only if the set \( W \setminus \{w\} \) is a single connected component of \( T\backslash w \)   
\end{proposition}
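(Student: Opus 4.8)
The plan is to prove both directions of the biconditional using the characterization of the convex closure operator on a vertex geometry. First I would record the basic fact that for a tree $T$ and a subset $A \subseteq S$, the closure $\tau(A)$ is the vertex set of the smallest connected subgraph of $T$ containing $A$, i.e.\ the union of $A$ with all vertices lying on paths between pairs of vertices of $A$ (the ``Steiner hull'' of $A$ in $T$). This is immediate from the definition of the vertex geometry: convex sets are exactly the connected subgraphs, intersections of connected subgraphs containing $A$ are connected, and the minimal one is the Steiner hull.

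For the forward direction, suppose $w \in \mathrm{Ex}(W)$, so $w \notin \tau(W \setminus \{w\})$. I would argue that $W \setminus \{w\}$ lies entirely within one connected component of $T \setminus w$: if two vertices $u_1, u_2 \in W \setminus \{w\}$ were in different components of $T \setminus w$, then the unique $u_1$--$u_2$ path in $T$ would pass through $w$, forcing $w \in \tau(\{u_1,u_2\}) \subseteq \tau(W \setminus \{w\})$, a contradiction. Hence all of $W \setminus \{w\}$ sits in a single component $C$ of $T \setminus w$; but I also need that $W \setminus \{w\}$ is \emph{equal to} a component rather than a proper subset of one --- re-reading the statement, ``is a single connected component of $T \setminus w$'' should be interpreted as ``is contained in a single connected component,'' or equivalently the induced subgraph on $W \setminus \{w\}$ is connected and meets only one component; I would state precisely which reading I adopt and prove that reading. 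Under the ``contained in a single component'' reading, the forward direction is exactly the paragraph above.

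For the converse, suppose $W \setminus \{w\}$ is contained in a single connected component $C$ of $T \setminus w$. Then the Steiner hull of $W \setminus \{w\}$ in $T$ is itself a connected subgraph of $C$ (any path in $T$ between two vertices of $C$ stays in $C$, since removing $w$ does not disconnect $C$ and $T$ is a tree so paths are unique), hence does not contain $w$. Therefore $w \notin \tau(W \setminus \{w\})$, so $w \in \mathrm{Ex}(W)$. The main obstacle I anticipate is purely one of interpretation: pinning down exactly what ``$W \setminus \{w\}$ is a single connected component'' means (whether it asserts equality with a component, or containment in a component, or connectedness of the induced subgraph together with meeting one component), and making sure the statement as used downstream in the tree results of this section is the one actually proved; the graph-theoretic content itself --- unique paths in trees, and the Steiner-hull description of $\tau$ --- is routine once that is fixed.
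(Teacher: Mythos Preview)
Your proposal is correct and follows essentially the same approach as the paper: both directions are proved via the same ideas (vertices in different components of $T\setminus w$ force $w$ onto the connecting path and hence into the closure; a single component is convex so the closure of $W\setminus\{w\}$ stays inside it and misses $w$). Your interpretive worry is well-founded---the paper's own proof in fact establishes the ``contained in a single connected component'' reading rather than literal equality, so your chosen reading matches what is actually proved and used downstream.
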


\begin{proof}
    Consider each \( v \in N(w) \), where \( V_v \) denotes the set of vertices forming the connected component of \( T \setminus w \) that includes \( v \). The mapping \( v \mapsto V_v \) establishes a one-to-one correspondence between the neighbors of \( w \) and the vertex sets of the connected components of \( T \setminus w \).  
    First, assume that two vertices \( w_1, w_2 \in W \setminus \{w\} \) reside in separate connected components of \( T \setminus w \). In this case, any connected subgraph of \( T \) that contains both \( w_1 \) and \( w_2 \) must also include \( w \). Consequently, \( w \) belongs to \( \tau(W \setminus \{w\}) \), implying \( w \notin Ex(W) \).  
    
    Conversely, suppose \( W \setminus \{w\} \) is entirely contained within a single connected component \( V_v \) of \( T \setminus w \). Since \( V_v \) is convex and encloses \( W \setminus \{w\} \), it follows that \( \tau(W \setminus \{w\}) \subseteq V_v \). Given that \( w \notin V_v \), we conclude that \( w \in Ex(W) \).
\end{proof}

There are two cases to consider to completely solve fro the nim numbers in the vertex geometry: when $|W| = 1$ and when $|W| \geq 2$. We first consider the case when $|W| =1$. 

Suppose that $W = \{w\} \subseteq S$. For each $v \in N(w)$, there exists a unique connected component, $M_v$ of $T\backslash w$ that contains $v$. The vertex set $M_v$ is a maximally non-generating set because the next move will have to include $W = \{w\}$. Therefore, $\mathcal{M} = \{M_v \mid v \in N(w)\}$ and all of the elements of $\mathcal{M}$ are disjoint by construction. 

\begin{definition}
    The \emph{signature} of a game is $(e,o)$ in which $e$ is the cardinality of the set $\{M_v \in \mathcal{M} \mid \text{pty}(M_v) = 0\}$ and $o$ is the cardinality of the set $\{M_v \in \mathcal{M} \mid \text{pty}(M_v) = 1\}$. In other words, $e$ is the number of even-parity sets $M_v$ and $o$ is the number of odd-parity sets $M_v$.
\end{definition}
\begin{figure}[htp]\centering
    \begin{tikzpicture}[
        scale=2,line width=1pt,
        node distance=0.3cm and 0.5cm, 
        every node/.style={draw, circle, align=center, minimum size=6pt, inner sep=1.2pt},
        blue/.style = {fill=blue!20},
        every edge quotes/.style = {auto, font=\footnotesize, sloped}
    ]

    \node[blue] (1) at (0,0) {1};
    \node (2) [left =of 1] {2};
    \node (3) [above left =of 2] {3};
    \node (4) [below left= of 2] {4};
    \node (5) [above = of 1] {5};
    \node (6) [above =of 5] {6};
    \node (7) [below right =of 1] {7};
    \node (8) [below right =of 7] {8};
    \node (9) [above right =of 1] {9};
    \path (1) edge (2);
    \path (3) edge (2);
    \path (4) edge (2);
    \path (5) edge (1);
    \path (5) edge (6);
    \path (1) edge (9);
    \path (1) edge (7);
    \path (7) edge (8);

    \end{tikzpicture}
\end{figure}

\begin{example}
    The diagram above shows a tree graph with vertex set $S = \{1,2,3,\dots, 9\}$. The winning set is $W= \{1\}$, as shaded with blue. $N(w) = N(1) = \{2,5,7,9\}$, so we have that $\mathcal{M} = \{M_2, M_5, M_7, M_9\}$ where $M_2 = \{2,3,4\}$, $M_5 \ \{5,6\}$, $M_7 = \{7,8\}$, and $M_9 = \{9\}$. The Frattini subset is $\Phi = \cap\mathcal{M} = \emptyset$. Using this, we can calculate a structure diagram as shown below, so $\text{nim}(\text{DNG}(S,W)) = 2$.
\end{example}
\begin{figure}[htp]\centering
    \begin{tikzpicture}[
        scale=3,line width=1pt,
        node distance=0.6cm and 1.0cm, 
        triangle/.style={draw, shape=regular polygon, regular polygon sides=3, minimum size=1cm, inner sep=1pt, align=center},
        upside down/.style={shape border rotate=180},
        every path/.style={->, thick}
    ]

    \node[triangle] (1) at (0,0) {3,2};
    \node[triangle] (2) at (-1.5,-0.5) {0,1};
    \node[triangle] (3) at (-0.5, -0.5) {0,1};
    \node[triangle,upside down] (4) at (0.5,-0.5) {1,0};
    \node[triangle,upside down] (5) at (1.5, -0.5) {1,0};
    \path (1) edge (2);
    \path (1) edge (3);
    \path (1) edge (4);
    \path (1) edge (5);

    \end{tikzpicture}
\end{figure}
\begin{proposition}
    Let $(S, \mathcal{K})$ be the vertex geometry of a tree $T$ and $W = \{w\} \subseteq S$. Given the signature of $(e,o)$, then 
    \[
    \text{nim}(\text{DNG}(S, W)) =  
    \begin{dcases} 
        0, & e=0, o=0\\
        0, & e \geq 1, o=0 \\
        1, & e=0, o \geq 1 \\
        3, & e\geq 1, o\geq 1
    \end{dcases}
    \]
\end{proposition}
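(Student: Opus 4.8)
The plan is to write out the structure diagram of $\text{DNG}(S,\{w\})$ explicitly, observe that it is essentially a ``star'', and then run the recursion for $\text{type}(X_I)$ at its root. As recorded just before the definition of the signature, $\mathcal{M}=\{M_v\mid v\in N(w)\}$ and these sets are pairwise disjoint; hence, once $|N(w)|\ge 2$, the intersection of any two or more members of $\mathcal{M}$ is empty, so $\mathcal{I}=\{M_v\mid v\in N(w)\}\cup\{\emptyset\}$ and $\Phi=\bigcap\mathcal{M}=\emptyset$. Since every legal (that is, non-generating) position $P\neq\emptyset$ lies in a unique $M_v$ by disjointness while $\lceil\emptyset\rceil=\emptyset$, the class $X_\emptyset$ consists only of the empty position (of even parity), each $X_{M_v}$ consists of the nonempty subsets of $M_v$, and $\text{Opt}(X_\emptyset)=\{X_{M_v}\mid v\in N(w)\}$. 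Two degenerate configurations should be handled first: if $N(w)=\emptyset$ (equivalently $S=\{w\}$, which is the only way to have $e=o=0$), then $\emptyset$ is already terminal and $\text{nim}=0$; if $|N(w)|=1$, then the whole game lives in the single terminal class $X_{S\setminus\{w\}}$ (which also contains $\emptyset$), and the lemma on terminal structure classes together with the parity of $|S\setminus\{w\}|$ gives $\text{nim}=0$ or $1$, in agreement with the signatures $(1,0)$ and $(0,1)$.

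Next I would determine the type of each leaf $X_{M_v}$. The crucial observation is that a legal move out of any position $P$ with $\emptyset\neq P\subseteq M_v$ can only adjoin another vertex of $M_v$: adjoining $w$, or adjoining a vertex of another component of $T\setminus w$, forces $w$ into the convex closure and is therefore forbidden. Hence the continuation from such a $P$ is precisely the ``fill-up game'' on $M_v$, whose nim value at a position of size $k$ equals $(|M_v|-k)\bmod 2$ by a one-line induction (each position sees only the single nim value of the next size up). Consequently $X_{M_v}$ behaves as a terminal class, and $\text{type}(X_{M_v})$ is $(0,0,1)$ when $|M_v|$ is even and $(1,1,0)$ when $|M_v|$ is odd; thus the leaf has shape $(0,0,1)$ exactly when $M_v$ is counted in $e$, and shape $(1,1,0)$ exactly when it is counted in $o$. (Alternatively, one applies the recursion for $\text{type}(X_I)$ at the self-loop of $X_{M_v}$ and selects between the two self-consistent solutions using the known terminal position $M_v\in X_{M_v}$.)

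Finally I would run the recursion for $\text{type}(X_I)$ at the root $X_\emptyset$, where $\text{pty}(\emptyset)=0$. From the leaf types, $\text{nim}_1(X_{M_v})$ is $1$ for an even-parity $M_v$ and $0$ for an odd-parity one, while $\text{nim}_0(X_{M_v})$ is $0$ and $1$ respectively; so $\text{nim}_0(X_\emptyset)$ is the $\text{mex}$ of the set containing $1$ iff $e\ge 1$ and containing $0$ iff $o\ge 1$, and $\text{nim}_1(X_\emptyset)$ is the $\text{mex}$ of that set enlarged by $\text{nim}_0(X_\emptyset)$. Since the starting position $\emptyset$ has even parity, $\text{nim}(\text{DNG}(S,\{w\}))=\text{nim}_0(X_\emptyset)$, and a short case check over the four signature possibilities produces the table in the statement.

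The step I expect to be the main obstacle is the leaf computation: one must argue carefully that no legal move ever escapes a component once it has been entered, and that the resulting forced ``fill-up'' play really collapses all of $X_{M_v}$ to a parity-only nim value, so that the lemma on terminal structure classes (or the self-loop form of the type recursion) applies cleanly. The remaining wrinkles --- the self-loops at the leaves and the $|N(w)|\le 1$ degenerate cases that do not fit the star picture --- are routine but must be separated off from the main argument.
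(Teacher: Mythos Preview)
Your outline is essentially the paper's own argument: both reduce to the star-shaped structure diagram with root $X_\emptyset$ and leaves $X_{M_v}$, invoke the lemma on terminal classes to get $\text{type}(X_{M_v})\in\{(0,0,1),(1,1,0)\}$, and then run the type recursion at $X_\emptyset$.  Your treatment of the degenerate cases $|N(w)|\le 1$ and your justification that no legal move leaves a component are more explicit than the paper's, but the method is the same.

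There is, however, a genuine problem with your final sentence.  If you actually carry out the ``short case check'' you will \emph{not} reproduce the last line of the table.  With $e\ge 1$ and $o\ge 1$ your own formula gives
\[
\text{nim}_0(X_\emptyset)=\text{mex}\bigl(\{\,\text{nim}_1(X_{M_v}):v\in N(w)\,\}\bigr)=\text{mex}\{0,1\}=2,
\]
and since the initial position $\emptyset$ has even parity, $\text{nim}(\text{DNG}(S,\{w\}))=2$, not $3$.  One can verify this directly on the path $1\text{--}2\text{--}3\text{--}4$ with $w=2$ (signature $(1,1)$): the options of $\emptyset$ have nim-values $0,1,1$, so $\text{nim}(\emptyset)=2$.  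The paper's proof makes the same slip in its displayed computation, writing $\text{mex}(0,1,2)$ for $\text{nim}_0(X_\Phi)$ where only the values $0$ and $1$ occur among the $\text{nim}_1$ of the options (indeed, the paper's own worked example immediately preceding the proposition reports $\text{nim}=2$ for a signature-$(2,2)$ tree).  So your approach is correct, but you should not claim it recovers the stated table; instead it corrects the fourth case to $\text{nim}=2$.
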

\begin{proof}
    The diagram is shown below in the order of the cases in the proposition. When $e=0$ and $o=0$, then the starting position is the terminal position, which means that the nim number is 0. 

    When $e \geq 1$ and $o = 0$, we have to consider two cases: $e=1$ and $e >1$. When $e=1$, the Frattini subset is the maximally non-generating set itself, yielding nim number of $0$ as its type is $(0,0,1)$. When $e >1$, the Frattini subset is an empty set (as two maximally non-generating set cannot share an element) and $\text{Opt}(\Phi) = \mathcal{M}$, so 
    \begin{equation}
        \begin{split}
            \text{type}(X_\Phi)  & = (0,\text{nim}_0(X_\Phi), \text{nim}_1(X_\Phi)) \\
             & = (0,\text{mex}(\text{nim}_1(\text{Opt}(\Phi))), \text{mex}(\text{nim}_0(\text{Opt}(\Phi)))\cup {\text{nim}_0(\Phi)})\\
             & = (0, \text{mex}(1), \text{mex}(0)) \\
             & = (0,0,1)
        \end{split}
    \end{equation}

    When $o \geq 1$ and $e=1$, we also have two cases: $o=1$ and $o>1$. When $o=1$, the Frattini subset has to be maximally non-generating set itself because there is only one maximally non-generating set. This yields the nim number of 1 as its type is $(1,1,0)$ because $\text{pty}(X_\Phi) = \text{pty}(M) = 1$. When $o>1$, the Frattini subset is an empty set and $\text{Opt}(\Phi) = \mathcal{M}$, so  
    \begin{equation}
        \begin{split}
            \text{type}(X_\Phi)  & = (1,\text{nim}_0(X_\Phi), \text{nim}_1(X_\Phi)) \\
             & = (1,\text{mex}(\text{nim}_1(\text{Opt}(\Phi)))\cup {\text{nim}_1(\Phi)}), \text{mex}(\text{nim}_0(\text{Opt}(\Phi)))\\
             & = (1, \text{mex}(0), \text{mex}(1)) \\
             & = (1,1,0)
        \end{split}
    \end{equation}
    When $e\geq1$ and $o\geq 1$, the Frattini subset is empty as there are at least two maximally non-generating sets. Note that even cardinality maximally non-generating sets have type $(0,0,1)$ and odd cardinality maximally non-generating sets have type $(1,1,0)$. All of the maximally non-generating sets are options of $X_\Phi$, so
    \begin{equation}
        \begin{split}
            \text{type}(X_\Phi)  & = (0,\text{nim}_0(X_\Phi), \text{nim}_1(X_\Phi)) \\
             & = (0,\text{mex}(\text{nim}_1(\text{Opt}(\Phi))), \text{mex}(\text{nim}_0(\text{Opt}(\Phi)))\cup {\text{nim}_0(\Phi)})\\
             & = (0,\text{mex}(0,1,2), \text{mex}(0,1)) \\
             & = (0,3,2)
        \end{split}
    \end{equation}
    Therefore, the nim number is 3.
\end{proof}
\begin{figure}[htp]\centering
    \begin{tikzpicture}[
        scale=3,line width=1pt,
        node distance=0.6cm and 1.0cm, 
        triangle/.style={draw, shape=regular polygon, regular polygon sides=3, minimum size=1cm, inner sep=1pt, align=center},
        upside down/.style={shape border rotate=180},
        every path/.style={->, thick}
    ]

    \node[triangle, upside down] (1) at (0,0) {0,1};
    \node[triangle] (2) at (1,0) {1,0};
    \node[triangle] (3) [above =of 2] {1,0};
    \node[triangle, upside down] (4) at (2,0) {0,1};
    \node[triangle] (5) [above =of 4] {0,1};
    \node[triangle, upside down] (6) at (3,0) {0,1};
    \node[triangle] (7) [above right=of 6] {3,2};
    \node[triangle] (8) [below right =of 7] {1,0};
    \path (3) edge (2);
    \path (5) edge (4);
    \path (7) edge (6);
    \path (7) edge (8);
    \end{tikzpicture}
\end{figure}

Now we consider the case when $|W|\geq 2$. If \( w \) belongs to \( Ex(W) \), then removing \( w \) from \( T \) results in a single connected component that includes elements of \( W \) as we proved in Proposition 19. We denote the set of vertices in this component as \( M_w \). By construction, $M_w$ is a maximally non-generating set. We further define \( V_w := S \setminus M_w \), leading to a collection of disjoint sets \( \{V_w \mid w \in Ex(W)\} \).

There exists a one-to-one correspondence between the subsets of \( \text{Ex}(W) \) and the intersection subsets, expressed as \( A \mapsto M_A \), where:

\[
M_A := \bigcap \{M_w \mid w \in A\} = S \setminus \bigcup \{V_w \mid w \in A\}
\]

By definition, \( S = M_{\emptyset} \) and \( \emptyset = M_{\text{Ex}(W)} \). The deficiency of a structure class \( M_A \) is given by \( \delta(M_A) := |A| \). Additionally, the signature of a structure class \( X_{M_A} \) is represented as \( \sigma(X_{M_A}) := (e, o) \), where \( e \) denotes the number of elements \( a \in A \) for which \( pty(V_a) = 0 \), and \( o \) represents the number of elements \( a \in A \) for which \( pty(V_a) = 1 \). The deficiency follows the relation \( \delta(M_A) = e + o \). The overall signature of the game is given by \( \sigma(X_{\emptyset}) \).

\begin{proposition}
    Let $(S, \mathcal{K})$ be the vertex geometry of a tree $T$ and $W \subseteq S$ such that $|W| \geq 2$. Given the signature of the game is $(e,o)$, 
    \[
    \text{nim}(\text{DNG}(S,W)) = \begin{dcases}
        1, & o>e \\
        0, & o<e\\
        3, & o=e, o \equiv 1 \pmod2\\
        2, & o=e, o \equiv 0\pmod2\\
    \end{dcases}
    \]
    when $\text{pty}(S) = 0$ and 
    \[
    \text{nim}(\text{DNG}(S,W)) = \begin{dcases}
        0, & o>e \\
        1, & o<e\\
        3, & o=e, o \equiv 1 \pmod2\\
        2, & o=e, o \equiv 0\pmod2\\
    \end{dcases}
    \]
    when $\text{pty}(S) = 1$.
\end{proposition}

\begin{proof}
    Let \( X_J \) be an option of \( X_I \), meaning that we can move from \( X_I \) to \( X_J \) by selecting a new element. Since \( I = M_A \), adding an element \( v \) from \( J \setminus I \) results in \( I \cup \{v\} \in X_J \).  Now, there is exactly one unique element \( a \) in \( A \) such that \( v \) belongs to \( V_a \), and removing \( a \) from \( M_A \) gives us \( J = M_A \setminus \{a\} \).  
    
    This shows that each arrow in the structure diagram corresponds directly to a unique extreme point \( a \) of \( W \).  Since each move reduces \( |A| \) by exactly one, we see that the distance \( \delta(J) \) (the number of missing points) follows the pattern:  
    
    \[
    \delta(J) = |A| - 1 = \delta(I) - 1.
    \]
    In simpler terms, this means that moves always decrease the distance from \( X_S \) by exactly one step, and there are no direct connections between states that are the same distance away.

    Assume that \( I = M_A \) and that the signature of \( X_I \) is \( \sigma(X_I) = (e, o) \). If both \( e \) and \( o \) are at least 1, then there exist elements \( a, b \in A \) such that \( \text{pty}(V_a) = 0 \) and \( \text{pty}(V_b) = 1 \). Therefore, the signature of the options of \( X_I \) follows the pattern:  
    \[
    \sigma(\text{Opt}(X_I)) = \begin{dcases}
        \{e-1,0\}, & o=0 \\
        \{ 0,o-1\} & e=0\\
        \{e-1,o\}, \{e, o-1\}, & e,o \geq 1\\
    \end{dcases}
    \]

    Now, our result follows from type calculus and the structural induction. The details of this structural induction is shown in the diagram below for when $\text{pty}(S) = 0$. The only difference that occurs when $\text{pty}(S)= 1$ is that the signature $(o,e)$ is flipped; the signature that would be $(o,e)$ when $\text{pty}(S)=0$ is $(e,o)$ when $\text{pty}(S)=1$ due to the parity argument. Therefore, the result when $\text{pty}(S)=1$ follows. 
\end{proof}

\begin{figure}[htp]\centering
    \begin{tikzpicture}[
        scale=3,line width=1pt,
        node distance=0.6cm and 0.5cm, 
        every node/.style={draw, shape=regular polygon, regular polygon sides=3, minimum size=0.1cm, inner sep=0.1pt, align=center},
        down/.style={shape border rotate=180},
        every path/.style={->, thick}
    ]

    \node[down] (1) at (0,0) {1,0};
    \node (2) [above left=of 1] {1,0};
    \node[down] (3) [above right= of 1] {3,2};
    \node[down] (4) [above left=of 2] {1,0};
    \node (5) [above right= of 2] {1,0};
    \node[down] (6) [above right=of 3] {0,1};
    \node (7) [above left= of 4] {1,0};
    \node[down] (8) [above left=of 5] {1,0};
    \node (9) [above left= of 6] {2,3};
    \node[down] (10) [above right=of 6] {0,1};
    \node[down] (11) [above left= of 7] {1,0};
    \node (12) [above left= of 8] {1,0};
    \node[down] (13) [above left= of 9] {1,0};
    \node (14) [above left= of 10] {0,1};
    \node[down] (15) [above right= of 10] {0,1};
    \node (16) [below right= of 3] {0,1};
    \node (17) [below right= of 6] {0,1};
    \node (18) [below right= of 10] {0,1};
    \node (19) [below right= of 15] {0,1};
    \node (20) [above right= of 19] {0,1};

    \path (11) edge (7);
    \path (12) edge (8);
    \path (13) edge (9);
    \path (14) edge (10);
    \path (15) edge (19);
    \path (12) edge (7);
    \path (13) edge (8);
    \path (14) edge (9);
    \path (15) edge (10);
    \path (20) edge (19);
    \path (7) edge (4);
    \path (8) edge (5);
    \path (9) edge (6);
    \path (10) edge (18);
    \path (8) edge (4);
    \path (9) edge (5);
    \path (10) edge (6);
    \path (19) edge (18);
    \path (4) edge (2);
    \path (5) edge (3);
    \path (6) edge (17);
    \path (5) edge (2);
    \path (6) edge (3);
    \path (18) edge (17);
    \path (2) edge (1);
    \path (3) edge (16);
    \path (3) edge (1);
    \path (17) edge (16); 
    
    \end{tikzpicture}
\end{figure}

\begin{corollary}
    The spectrum of nim numbers of vertex geometry of a tree is $\{0,1,2,3\}$
\end{corollary}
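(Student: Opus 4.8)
\textit{Proof proposal.} The plan is to read the inclusion $\subseteq\{0,1,2,3\}$ directly off Propositions 22 and 24, and then to realize each of the four values by a small explicit tree. Since $W$ is required to be nonempty, every vertex--geometry game $\text{DNG}(S,W)$ on a tree has either $|W|=1$ or $|W|\ge 2$; the first regime is completely described by Proposition 22 and the second by Proposition 24, so together these two results determine every such nim number.

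For the containment, suppose first that $|W|=1$. Proposition 22 expresses $\text{nim}(\text{DNG}(S,W))$ in terms of the signature $(e,o)$, and its possible values are $0$, $1$, and $3$, so in particular it lies in $\{0,1,3\}$. If instead $|W|\ge 2$, Proposition 24 expresses the nim number in terms of $(e,o)$ and $\text{pty}(S)$, and every branch of that case analysis yields a value in $\{0,1,2,3\}$. Hence $\text{nim}(\text{DNG}(S,W))\in\{0,1,2,3\}$ for every tree and every nonempty $W$, which is one inclusion.

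For the reverse inclusion I would exhibit a realizing example for each value. The path $1-2-3$ with $W=\{2\}$ (Example 8) has $\mathcal{M}=\{\{1\},\{3\}\}$, hence signature $(0,2)$, giving nim $1$ by Proposition 22. The same path with $W=\{1\}$ has $\mathcal{M}=\{\{2,3\}\}$, hence signature $(1,0)$, giving nim $0$ by Proposition 22. The path $1-2-3-4$ with $W=\{2,4\}$ has $\text{Ex}(W)=\{2,4\}$ with $V_2=\{1,2\}$ and $V_4=\{4\}$, so its signature is $(1,1)$; since $\text{pty}(S)=0$ and $o=e$ is odd, Proposition 24 gives nim $3$. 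Finally, let $T$ be the tree with edges $\{c,a\},\{a,a'\},\{c,b\},\{b,b'\},\{c,g\},\{c,h\}$ and take $W=\{a,b,g,h\}$; then every point of $W$ is extreme, $V_a=\{a,a'\}$ and $V_b=\{b,b'\}$ have even parity while $V_g=\{g\}$ and $V_h=\{h\}$ have odd parity, so the signature is $(2,2)$, and since $o=e$ is even Proposition 24 gives nim $2$. Thus each of $0,1,2,3$ is attained, completing the equality.

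The whole argument is essentially bookkeeping once Propositions 22 and 24 are in hand. The only step that needs a little care is the construction of the examples attaining $2$ and $3$: one must keep in mind that $\text{Ex}(W)\subseteq W$ (so $|W|$ has to be large enough to produce the required signature) and must read off the component sets $V_w$ and their parities correctly. I anticipate no genuine obstacle beyond verifying these small computations.
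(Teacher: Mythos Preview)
Your proposal is correct and is essentially the natural argument the paper has in mind: the paper states this corollary without proof, treating it as an immediate consequence of the two preceding propositions (the $|W|=1$ and $|W|\ge 2$ cases). Your write-up makes both directions explicit---reading off the containment in $\{0,1,2,3\}$ from those propositions and exhibiting small trees realizing each value---and all four of your examples check out, so there is nothing to add.
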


\begin{theorem}
    The spectrum of nim numbers for the sum of games of vertex geometry of a tree is $\{0,1,2,3\}$.
\end{theorem}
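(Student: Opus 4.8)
The plan is to reduce everything to Corollary~26 together with the Sprague--Grundy sum identity $\text{nim}(P+R)=\text{nim}(P)\oplus\text{nim}(R)$ and the closure of $\{0,1,2,3\}$ under bitwise XOR, in exactly the spirit of the proof of Proposition~17. So the theorem will follow with essentially no new work; the substance is already contained in Corollary~26.

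First I would establish the containment. By Corollary~26, every single game $\text{DNG}(S,W)$ played on the vertex geometry of a tree has $\text{nim}(\text{DNG}(S,W)) \in \{0,1,2,3\}$. Now $\{0,1,2,3\}$ is precisely the set of nonnegative integers whose binary expansion uses at most two bits, and since $\oplus$ acts bitwise it cannot produce a bit that was not already present in one of its arguments; hence $a,b\in\{0,1,2,3\}$ implies $a\oplus b\in\{0,1,2,3\}$. Applying $\text{nim}(P+R)=\text{nim}(P)\oplus\text{nim}(R)$ and inducting on the number of summands, any finite sum $\text{DNG}(S_1,W_1)+\cdots+\text{DNG}(S_k,W_k)$ of vertex-geometry-of-tree games satisfies $\text{nim} = \text{nim}(\text{DNG}(S_1,W_1))\oplus\cdots\oplus\text{nim}(\text{DNG}(S_k,W_k)) \in \{0,1,2,3\}$, with the empty sum contributing $\text{nim}=0$. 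Thus the spectrum of such sums is contained in $\{0,1,2,3\}$.

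Next I would check that each of $0,1,2,3$ is actually attained. Since a single game is itself a one-term sum, Corollary~26 already exhibits all four values; if one prefers sums of at least two games, one pads with a terminal game of nim number $0$ --- for instance $\text{DNG}(S,W)$ on the one-vertex tree with $W$ that vertex, which has signature $(0,0)$ and nim number $0$ by Proposition~21 --- using $v\oplus 0=v$. Explicitly, $\text{nim}=1$ is realized by the path on two vertices with $W$ a single endpoint (signature $(0,1)$ in Proposition~21), $\text{nim}=3$ by the path on four vertices with $W$ the degree-one-away vertex so that the two components of $T\setminus w$ have sizes $2$ and $1$ (signature $(1,1)$ in Proposition~21), and $\text{nim}=2$ by the game of Example~22. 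Combined with the containment above, this gives that the spectrum equals $\{0,1,2,3\}$.

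I do not expect a genuine obstacle here: all the difficulty was already absorbed into Corollary~26, and the remainder is the short XOR-closure argument. The only steps that deserve a sentence of care are (i) noting that $\{0,1,2,3\}$ is closed under $\oplus$ because it is the full set of two-bit strings --- not merely that it contains and absorbs $0$ --- and (ii) if a ``sum'' is required to have at least two summands, confirming that a vertex-geometry-of-tree game of nim number $0$ exists so that the padding in the previous paragraph is legitimate, which Proposition~21 supplies.
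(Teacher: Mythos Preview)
Your argument is correct and follows the same route as the paper: cite the corollary that the single-game spectrum on vertex geometries of trees is $\{0,1,2,3\}$, observe that this set is closed under $\oplus$ (being the full set of two-bit strings), and note that each value is already realized by a single summand, so padding with a nim-$0$ game via $v\oplus 0=v$ handles sums. One small caveat: the example you cite for $\text{nim}=2$ has $|W|=1$, and by the very proposition you invoke its nim value must lie in $\{0,1,3\}$ (the paper's ``$=2$'' there is inconsistent with both that proposition and the displayed structure diagram); the value $2$ only arises in the $|W|\ge 2$ case with $o=e$ even, but since you already rely on the corollary for attainability this does not affect your proof.
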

\begin{proof}
    We use the bitwise XOR operator to compute the nim number of the sum of the game. The nim numbers $\{0,1,2,3\}$ are obtainable because $0 \oplus0 = 0$, $0 \oplus 1=1$, $0 \oplus 2=2$, and $0 \oplus 3 = 3$. The nim number is closed under the XOR operator because bitwise XOR operator only computes two bits at a time (since $3 = 11_2$), so the nim number of the sum has to be greater than or equal to $11_2 = 3$. Thus, we are done.
\end{proof}

We can also extend the concept of convex geometries to a tree graph $T$ with an edge set $S$. Similar to vertex geometries, edge sets of connected subgraphs of $T$ form a convex geometry on $S$. By finding the nim number, we completely solve the question proposed in \cite{McCoy} on edge geometries.

\begin{definition}
    Suppose that $W$ is the winning set of the avoidance game played on the edge geometry. We define $L(W)$ to be the set of connected components in $T\backslash W$.
\end{definition}

\begin{figure}[htp]\centering
    \begin{tikzpicture}[
        scale=4,line width=1pt,
        node distance=0.8cm and 1.2cm, 
        every node/.style={draw, circle, align=center, minimum size=6pt, inner sep=1.2pt},
        every edge quotes/.style = {auto, font=\footnotesize, sloped}
    ]

    \node (1) at (0,0) {};
    \node (2) [left =of 1] {};
    \node (3) [left =of 2] {};
    \node (4) [above left= of 3] {};
    \node (5) [below left= of 3] {};
    \node (6) [right =of 1] {};
    \node (7) [above right =of 6] {};
    \node (8) [above right =of 7] {};
    \node (9) [below right =of 7] {};
    \node (10) [below right =of 6] {};

    \path (1) edge ["A"] (2);
    \path (3) edge ["B"] (2);
    \path (4) edge ["C"] (3);
    \path (5) edge ["D"] (3);
    \path (1) edge ["E"] (6);
    \path (6) edge ["F"] (7);
    \path (7) edge ["G"] (8);
    \path (7) edge ["H"] (9);
    \path (10) edge ["I"] (6);

    \end{tikzpicture}
\end{figure}

\begin{figure}[htp]\centering
    \begin{tikzpicture}[
        scale=4,line width=1pt,
        node distance=0.8cm and 1.2cm, 
        every node/.style={draw, circle, align=center, minimum size=6pt, inner sep=1.2pt},
        every edge quotes/.style = {auto, font=\footnotesize, sloped}
    ]

    \node (1) at (0,0) {};
    \node (2) [left =of 1] {};
    \node (3) [left =of 2] {};
    \node (4) [above left= of 3] {};
    \node (5) [below left= of 3] {};
    \node (6) [right =of 1] {};
    \node (7) [above right =of 6] {};
    \node (8) [above right =of 7] {};
    \node (9) [below right =of 7] {};
    \node (10) [below right =of 6] {};

    \path (3) edge ["B"] (2);
    \path (4) edge ["C"] (3);
    \path (5) edge ["D"] (3);
    \path (1) edge ["E"] (6);
    \path (6) edge ["F"] (7);
    \path (7) edge ["G"] (8);
    \path (7) edge ["H"] (9);
    \path (10) edge ["I"] (6);

    \end{tikzpicture}
\end{figure}

\begin{figure}[htp]\centering
    \begin{tikzpicture}[
        scale=4,line width=1pt,
        node distance=0.8cm and 1.2cm, 
        every node/.style={draw, circle, align=center, minimum size=6pt, inner sep=1.2pt},
        every edge quotes/.style = {auto, font=\footnotesize, sloped}
    ]

    \node (1) at (0,0) {};
    \node (2) [left =of 1] {};
    \node (3) [left =of 2] {};
    \node (4) [above left= of 3] {};
    \node (5) [below left= of 3] {};
    \node (6) [right =of 1] {};
    \node (7) [above right =of 6] {};
    \node (8) [above right =of 7] {};
    \node (9) [below right =of 7] {};
    \node (10) [below right =of 6] {};

    \path (4) edge ["C"] (3);
    \path (5) edge ["D"] (3);
    \path (7) edge ["G"] (8);
    \path (7) edge ["H"] (9);
    \path (10) edge ["I"] (6);

    \end{tikzpicture}
\end{figure}
\begin{example}
    An example of a convex set for the edge geometry shown above are sets $\{A,B,C,D\}$ and $\{A,E,I,F\}$ as they are edges of a connected subgraph of the tree. However, $\{B,E,G\}$ and $\{A,F,G\}$ are not convex sets because they form a disconnected subgraph. In addition, $N(\{A\}) = \{\{B,C,D\}, \{E,F,G,H,I\}$ because these edge sets are the connected components when we remove $A$. Similarly, $N(\{B,A,E,F\}) = \{\{C,D\}, \{I\}, \{G,H\}\}$.
\end{example}

\begin{proposition}
    Let $\{S, \mathcal{K}\}$ be the edge geometry of a tree $T$ where $W \subseteq S$. Then, $w \in \text{Ex}(W)$ if and only if the elements of $W \backslash \{w\}$ are verteices in a single connected componenet of $T\backslash w$. 
\end{proposition}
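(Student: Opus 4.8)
The plan is to mirror the proof of the vertex-geometry analogue (the earlier proposition characterizing $\text{Ex}(W)$ for vertex geometries of trees), replacing vertices by edges and exploiting the fact that deleting an edge $w=uv$ from a tree $T$ splits it into exactly two subtrees. First I would record the description of the closure operator on the edge geometry: for a nonempty set $A\subseteq S$ of edges, $\tau(A)$ is the edge set of the unique minimal subtree of $T$ containing every edge of $A$, equivalently the union of the tree-paths joining endpoints of edges of $A$. In particular, an edge $f$ lies in $\tau(A)$ precisely when $A$ contains edges in both components of $T\setminus f$; if instead all of $A$ lies in one component $T'$ of $T\setminus f$, then $E(T')$ is a convex set containing $A$ and avoiding $f$, so $f\notin\tau(A)$. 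The degenerate case $W=\{w\}$ (where $W\setminus\{w\}=\emptyset$) is immediate, since $\tau(\emptyset)=\emptyset$ forces $w\in\text{Ex}(W)$ while the right-hand condition holds vacuously.

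For the ($\Leftarrow$) direction, assume every edge of $W\setminus\{w\}$ lies in a single connected component $T_1$ of $T\setminus w$. Since $T_1$ is a connected subgraph of $T$, its edge set $E(T_1)$ is convex, and $W\setminus\{w\}\subseteq E(T_1)$, hence $\tau(W\setminus\{w\})\subseteq E(T_1)$. As $w\notin E(T_1)$, we conclude $w\notin\tau(W\setminus\{w\})$, i.e.\ $w\in\text{Ex}(W)$.

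For the ($\Rightarrow$) direction I argue the contrapositive. Write $w=uv$ and let $T_u,T_v$ be the two components of $T\setminus w$ containing $u$ and $v$ respectively. Suppose $W\setminus\{w\}$ contains an edge $e_1\in E(T_u)$ and an edge $e_2\in E(T_v)$. Any connected subgraph of $T$ containing both $e_1$ and $e_2$ contains a path between an endpoint of $e_1$ and an endpoint of $e_2$, and since $w$ is the unique edge whose removal separates $T_u$ from $T_v$, this path must traverse $w$. Hence $w\in\tau(\{e_1,e_2\})\subseteq\tau(W\setminus\{w\})$, so $w\notin\text{Ex}(W)$. Combining the two directions yields the equivalence.

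The only genuine content is the preliminary description of $\tau$ on the edge geometry together with the observation that every edge of a tree is a cut edge separating exactly the two components of its deletion; both are standard facts about trees, so I expect the main (and minor) obstacle to be stating them cleanly rather than any real difficulty. Once they are in place, the argument parallels the vertex-geometry case essentially verbatim.
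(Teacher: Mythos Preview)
Your proof is correct and follows exactly the approach the paper takes: the paper simply states that the argument is identical to the vertex-geometry case, and your write-up is precisely that adaptation carried out in detail (contrapositive for one direction via the cut-edge property, convexity of a component's edge set for the other). There is nothing to add.
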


\begin{proof}
    The proof is identical to the proof to Proposition 18 due to the similarity in their definition.
\end{proof}

There are two cases to consider to find the spectrum of nim numbers of edge geometries: $|W| = 1$ and $|W| \geq 2$. We first consider the case when $|W|=1$. Note by construction that $\mathcal{M} = N(W)$ by construction. $|\mathcal{M}| = |N(W)| \leq 2$ because $|W|=1$. The \emph{signature} of this game is $(e,o)$ where $e$ is the cardinality of $\{M \in \mathcal{M} \mid \text{pty}(M) = 0\}$ and $o$ is the cardinality of $\{M \in \mathcal{M} \mid \text{pty}(M) = 1\}$.

\begin{proposition}
    Let $(S, \mathcal{K})$ be the edge geometry of a tree $T$ and $W = \{w\} \subseteq S$. Given the signature of $(e,o)$, then 
    \[
    \text{nim}(\text{DNG}(S, W)) =  
    \begin{dcases} 
        0, & e=0, o=0\\
        0, & e =1, o=0 \\
        1, & e=0, o = 1 \\
        3, & e= 1, o=1
    \end{dcases}
    \]
\end{proposition}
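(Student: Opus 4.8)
The plan is to reuse, essentially verbatim, the analysis behind Proposition~24: a single-edge winning set behaves combinatorially just like a single-vertex one. With $W=\{w\}$, the edge-geometry extreme-point characterization proved above shows that the maximally non-generating sets are precisely the edge sets of the components of $T\setminus w$ that contain an edge; since a tree minus one edge has exactly two components, $\mathcal{M}$ has at most two elements and they are pairwise disjoint, so $\mathcal{I}$ has at most three members. The four lines of the formula correspond to the four regimes $\mathcal{M}=\emptyset$; $\mathcal{M}=\{M\}$ with $\text{pty}(M)=0$; $\mathcal{M}=\{M\}$ with $\text{pty}(M)=1$; and $\mathcal{M}=\{M_1,M_2\}$ with $\text{pty}(M_1)\neq\text{pty}(M_2)$, and I would dispatch them one by one using the lemma on terminal structure classes and the type recursion from Section~3.

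For the first three regimes the structure diagram has at most one class, so there is nothing to iterate. If $\mathcal{M}=\emptyset$ then the empty position is already generating, hence terminal, and $\text{nim}(\text{DNG}(S,W))=0$. If $\mathcal{M}=\{M\}$ (which includes the degenerate case $T$ equal to a single edge, where $M=\emptyset$) then the Frattini subset is $\Phi=M$, so $X_\Phi=X_M$ is terminal and the terminal-class lemma gives it type $(0,0,1)$ when $\text{pty}(M)=0$ and $(1,1,0)$ when $\text{pty}(M)=1$. Since the starting position $\emptyset\subseteq M$ has even parity, $\text{nim}(\text{DNG}(S,W))=\text{nim}_0(X_M)$, which is $0$ and $1$ respectively. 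These cases are immediate once the shape of $\mathcal{M}$ is pinned down.

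The only case with content is $\mathcal{M}=\{M_1,M_2\}$ with, say, $\text{pty}(M_1)=0$ and $\text{pty}(M_2)=1$. Here $M_1\cap M_2=\emptyset$, so $\Phi=\emptyset$, and the key structural fact to verify is that $\text{Opt}(X_\emptyset)=\{X_{M_1},X_{M_2}\}$: any legal move from $\emptyset$ adds a single edge, which lies in exactly one component and so has $\lceil\,\cdot\,\rceil$ equal to the corresponding $M_i$; conversely, a position containing edges from both components has convex closure containing the path joining them, which in a tree must traverse $w$, so such a position is generating and never occurs. Granting this, $X_{M_1}$ has type $(0,0,1)$ and $X_{M_2}$ has type $(1,1,0)$ by the terminal-class lemma, and substituting these into the type recursion for $X_\emptyset$ (with $\text{pty}(\emptyset)=0$) produces $\text{type}(X_\emptyset)$, from which $\text{nim}(\text{DNG}(S,W))=3$ follows exactly as in the $e,o\geq 1$ branch of Proposition~24; I would write out this $\text{mex}$ computation as a short displayed derivation mirroring that proof.

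I expect the $\mathcal{M}=\{M_1,M_2\}$ regime to be the main obstacle, not because it is deep but because it is the only place where something must be argued rather than read off: one must justify that the two terminal classes are the \emph{only} options of $X_\emptyset$ --- the ``edges from two distinct components force the bridge $w$'' argument --- and then run the asymmetric $\text{mex}$ recursion carefully, since the even- and odd-parity nim values of $X_\emptyset$ come from different sets and one must track that the starting position $\emptyset$ has even parity. Everything else --- computing $\mathcal{M}$, $\mathcal{I}$ and $\Phi$ in each regime and recording the types of the terminal classes --- is routine and parallels Sections~3 and~5.
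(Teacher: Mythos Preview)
Your proposal is correct and follows exactly the paper's approach: the paper's entire proof is the one-line remark that the argument is ``isomorphic'' to the vertex-geometry $|W|=1$ proposition because the edge-geometry $\mathcal{M}$ is a restricted instance of the vertex-geometry one, and you carry this out explicitly by running the same case split and the same type-recursion computations. Your more detailed write-up (identifying $\mathcal{M}$ with the nonempty component edge sets of $T\setminus w$, checking disjointness, and redoing the $\text{mex}$ calculation for the two-class diagram) is just an expanded version of what the paper leaves implicit.
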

\begin{proof}
    The proof is isomorphic to the proof of Proposition 21 because $\mathcal{M}$ of the edge geometry is essentially a limited version of the $\mathcal{M}$ in the vertex geometry.
\end{proof}

Now, we consider the case when $|W| \geq 2$. We use similar notation as vertex geometry. If $w \in \text{Ex}(W)$, then $T \backslash \{w\}$ has exactly one component that contains some element of $W$. Call this component $M_w$. $M_w$ component is a maximally non-generating set because the convex closure of its options immediately include $w$ which is therefore a superset of $W$. Notice from the construction that $\mathcal{M} = \{M_w \mid w \in \text{Ex}(W)\}$. We also use the notation $E_w := S \backslash M_w$ for simplicity.

Suppose there is $A \subseteq S$. We define 
\[
M_A := \bigcap\{M_w \mid w \in A\}
\]
The deficiency of a structure class \( M_A \) is given by \( \delta(M_A) := |A| \). Additionally, the signature of a structure class \( X_{M_A} \) is represented as \( \sigma(X_{M_A}) := (e, o) \), where \( e \) denotes the number of elements \( a \in A \) for which \( pty(V_a) = 0 \), and \( o \) represents the number of elements \( a \in A \) for which \( pty(V_a) = 1 \). The deficiency follows the relation \( \delta(M_A) = e + o \). The overall signature of the game is given by \( \sigma(X_{\emptyset}) \).

\begin{proposition}
    Let $(S, \mathcal{K})$ be the vertex geometry of a tree $T$ and $W \subseteq S$ such that $|W| \geq 2$. Given the signature of the game is $(e,o)$, 
    \[
    \text{nim}(\text{DNG}(S,W)) = \begin{dcases}
        1, & o>e \\
        0, & o<e\\
        3, & o=e, o \equiv 1 \pmod2\\
        2, & o=e, o \equiv 0\pmod2\\
    \end{dcases}
    \]
    when $\text{pty}(S) = 0$ and 
    \[
    \text{nim}(\text{DNG}(S,W)) = \begin{dcases}
        0, & o>e \\
        1, & o<e\\
        3, & o=e, o \equiv 1 \pmod2\\
        2, & o=e, o \equiv 0\pmod2\\
    \end{dcases}
    \]
    when $\text{pty}(S) = 1$.
\end{proposition}
\begin{proof}
    The proof is isomorphic to the proof for Proposition 31.
\end{proof}
\begin{corollary}
    The spectrum of nim numbers of edge geometry of a tree is $\{0,1,2,3\}$.
\end{corollary}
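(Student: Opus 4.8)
The plan is to establish the statement by proving two inclusions: first that every nim number arising from an avoidance game on the edge geometry of a tree lies in $\{0,1,2,3\}$, and then that all four of these values actually occur.

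For the containment in $\{0,1,2,3\}$, I would split on the size of the winning set: every such game $\text{DNG}(S,W)$ has $|W|=1$ or $|W|\ge 2$. In the case $|W|=1$, deleting the single winning edge from the tree leaves at most two components, so the signature $(e,o)$ satisfies $e+o\le 2$; the corresponding proposition (which reduces to its vertex-geometry analogue, valid for arbitrary signatures) then gives $\text{nim}(\text{DNG}(S,W))\in\{0,1,3\}$. In the case $|W|\ge 2$, the other proposition gives $\text{nim}(\text{DNG}(S,W))\in\{0,1,2,3\}$. Either way the nim number lies in $\{0,1,2,3\}$, so the spectrum is contained in that set.

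For realisability I would exhibit one game attaining each value. Value $0$: the tree consisting of a single edge, with $W$ equal to that edge, is already terminal, so its nim number is $0$. Value $1$: the path $v_1v_2v_3v_4v_5$ with winning set $W=\{v_1v_2,v_4v_5\}$; here $\text{Ex}(W)=W$ and each of the two sets $E_w=S\setminus M_w$ is a single edge, so the signature is $(0,2)$, and since $|S|=4$ is even the $|W|\ge 2$ proposition gives nim $1$. Value $3$: the same path with winning set $W=\{v_2v_3\}$; here $\mathcal M=\{\{v_1v_2\},\{v_3v_4,v_4v_5\}\}$, so the signature is $(1,1)$ and the $|W|=1$ proposition gives nim $3$. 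Value $2$: a spider with centre $c$ and four legs, the $i$th leg being a path of $\ell_i$ edges emanating from $c$, with $W$ the set of the four edges of the legs incident to $c$; then $\text{Ex}(W)=W$, and the set $E_w$ attached to the leg-$i$ edge consists of exactly the $\ell_i$ edges of that leg, so taking $\ell_1=\ell_2=2$ and $\ell_3=\ell_4=3$ gives signature $(2,2)$ with $|W|=4$, and the $|W|\ge 2$ proposition gives nim $2$. Combining the two inclusions finishes the proof.

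The load-bearing but essentially mechanical part is verifying, for each witness, that the listed maximally non-generating sets, the extreme points of $W$, and the parities of the associated sets are exactly as claimed — notably, in the spider, that the four edges through $c$ are precisely the extreme points of $W$ and that $|E_w|=\ell_i$ once one counts the deleted edge $w$ together with the $\ell_i-1$ edges lying beyond it. A secondary point to confirm is that the $|W|=1$ proposition, as invoked above, genuinely handles every signature with $e+o\le 2$ rather than only those with $e,o\le 1$; this is immediate because deleting one edge of a tree creates at most two components, and the proof of that proposition mirrors its vertex-geometry analogue, which is already stated for arbitrary signatures. I do not foresee any real difficulty beyond these checks.
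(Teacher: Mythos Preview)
Your proof is correct and follows the same route the paper implicitly takes: the corollary is stated in the paper with no proof, as an immediate consequence of the two preceding propositions (the $|W|=1$ and $|W|\ge 2$ cases for edge geometries). Your write-up is in fact more complete than the paper's, since you supply explicit witnesses for each of the four nim values---in particular the spider realising nim value $2$---whereas the paper leaves realisability to the reader.
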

\begin{theorem}
    The spectrum of nim numbers for the sum of edge geometries is $\{0,1,2,3\}$. 
\end{theorem}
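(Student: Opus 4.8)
The plan is to reuse, almost verbatim, the argument given for the sum of vertex geometries: everything reduces to the Sprague--Grundy identity $\text{nim}(P+R)=\text{nim}(P)\oplus\text{nim}(R)$ together with the single-game result, the preceding corollary, that the spectrum of nim numbers of an edge geometry of a tree is $\{0,1,2,3\}$. So the two things to establish are that $\{0,1,2,3\}$ is closed under $\oplus$ (the containment direction) and that each of $0,1,2,3$ is realized by some finite sum of edge-geometry games (the attainment direction).

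For closure, I would observe that every element of $\{0,1,2,3\}$ is a two-bit nonnegative integer, i.e.\ $\{0,1,2,3\}=\{00_2,01_2,10_2,11_2\}$, and that a bitwise XOR of two-bit integers is again a two-bit integer. Hence if $G_1,\dots,G_k$ are avoidance games on edge geometries of trees, then $\text{nim}(G_1+\cdots+G_k)=\text{nim}(G_1)\oplus\cdots\oplus\text{nim}(G_k)\in\{0,1,2,3\}$ by an easy induction on $k$, using the corollary at the base and the sum formula at each step; so no finite sum can leave $\{0,1,2,3\}$.

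For attainment, I would first note that a single edge-geometry game is itself a (one-term) sum of edge-geometry games, and that, should a genuinely two-term sum be insisted upon, one may adjoin a terminal game of nim number $0$ without changing the value. It then suffices to exhibit single edge-geometry games hitting each of the four values, and these are already in hand from the earlier propositions: the $|W|=1$ case yields $0$ (signature $(0,0)$) and $1$ (signature $(0,1)$), while the $|W|\ge 2$ case yields $2$ (signature with $o=e$ nonzero and even) and $3$ (signature with $o=e$ odd, for instance any tree with $|\text{Ex}(W)|=2$ whose two complementary components $V_a$ have opposite parity). Combining the two directions gives the claimed spectrum $\{0,1,2,3\}$.

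I do not anticipate a real obstacle: the entire content lies in the single-game corollary and the XOR rule, both already available. The only point deserving care is to make the realization of the value $3$ explicit, since that is exactly what distinguishes this spectrum from $\{0,1,2\}$, and this is furnished by the $|W|\ge 2$ analysis as indicated above.
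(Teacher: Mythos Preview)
Your proposal is correct and follows essentially the same route as the paper: the paper's proof simply declares the argument ``isomorphic'' to the vertex-geometry case (Theorem~33), which in turn is exactly the XOR-closure-plus-attainment argument you spell out, relying on the preceding corollary that single edge-geometry games already realize $\{0,1,2,3\}$. Your explicit constructions for the values $2$ and $3$ go a bit beyond what the paper bothers to write down, but the underlying strategy is identical.
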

\begin{proof}
    The proof is isomorphic to the proof to Theorem 33 because the nim numbers in Corollary 32 and Corollary 39 are identical.
\end{proof}
\section{Conjectures and Future Research}
In this paper, we have explored the avoidance game within the framework of well-established convex geometries. However, the complete characterization of the spectrum of nim numbers for this game remains open. Considerable work remains in mapping the full theoretical landscape of this problem, particularly since convex geometries are not constrained by Lagrange’s Theorem, as seen in the group-theoretic version of the avoidance game as seen in \cite{Ernst}. 

Avoidance games on convex geometries, while highly theoretical, have broader applications in various fields of mathematics and computer science. In artificial intelligence and game strategy development, understanding the structure of impartial games such as $\text{DNG}$ provides insights into optimal decision-making and computational complexity. The study of nim numbers and structural equivalence in game theory can also be applied to algorithmic game playing, where AI systems analyze move sequences to determine winning strategies. Additionally, these concepts relate to optimization problems in discrete mathematics, where avoiding specific configurations—similar to avoidance games—can model real-world constraints, such as network security and resource allocation problems.

Furthermore, convex geometries and their associated game-theoretic properties are closely tied to combinatorial optimization and graph theory. Many real-world problems, such as sensor placement in networks, clustering in data science, and even social influence models, involve constraints that can be framed in terms of convex structures. By studying avoidance games in these settings and finding equivalence relations, we gain a deeper understanding of how structured constraints impact decision-making. Future research could explore whether the methods developed in this paper extend to more complex structures, such as higher-dimensional geometric spaces or dynamic graph models, further enhancing their relevance to applied mathematics and theoretical computer science.

In addition, we conjecture that the spectrum of the nim number of avoidance games played in convex geometries is all nonnegative integers $\{0,1,2,3,\dots\}$. One way that we could possibly go about proving this is finding a certain construction in $\mathbb{R}^2$ and extending the diagram to $\mathbb{R}^3$ and to higher dimensions so that the nim numbers increase.

\subsection*{Acknowledgements}

Thank you Dr. Thorne for mentoring me through this research.


\end{document}